\newtheorem{example}{Example}[section]
\newtheorem{definition}{Definition}[section]
\newtheorem{theorem}{Theorem}[section]
\newtheorem{lemma}{Lemma}[section]
\newtheorem*{maintheorem*}{Main Theorem}
\numberwithin{equation}{section}
\renewcommand{\i}{\ifmmode\mathit{\mathchar"7010 }\else\char"10 \fi}
\renewcommand{\j}{\ifmmode\mathit{\mathchar"7011 }\else\char"11 \fi}
\newcommand{\R}{\mathbb{R}}
\newcommand{\N}{\mathbb{N}}
\newcommand{\norm}[1]{\left\|#1\right\|}
\newcommand{\weak}{\rightharpoonup}
\newcommand{\pt}{\partial_t}
\newcommand{\ps}{\partial_s}
\newcommand{\ptt}{\partial_{tt}^2}
\newcommand{\px}{\partial_x }
\newcommand{\pxx}{\partial_{xx}^2}
\newcommand{\ptx}{\partial_{tx}^2}
\newcommand{\vfi}{\varphi}
\newcommand{\eps}{\varepsilon}
\def\begi{\begin{itemize}}
\def\endi{\end{itemize}}
\def\bega{\begin{array}}
\def\enda{\end{array}}
\def\bel{\begin{equation}\label}
\def\eeq{\end{equation}}
\newenvironment{Assumptions}
{%

\begin{enumerate}}%
{\end{enumerate}}
\begin{document}

\title[Nonlinear waves in adhesive strings]{Nonlinear waves in adhesive strings}

\author{G. M. Coclite}
\address[Giuseppe Maria Coclite]{\newline
  Dipartimento di Matematica, Universit\`a di Bari,
  Via E.~Orabona 4,I--70125 Bari, Italy.}
\email[]{giuseppemaria.coclite@uniba.it}
\urladdr{http://www.dm.uniba.it/Members/coclitegm/}

\author{G. Florio}
\address[Giuseppe Florio]{\newline
  Dipartimento di Meccanica, Matematica e Management, Politecnico di Bari,
  Via E.~Orabona 4,I--70125 Bari, Italy,
  \newline INFN, Sezione di Bari, I--70126 Bari, Italy.}
\email[]{giuseppe.florio@ba.infn.it }
\urladdr{https://sites.google.com/site/giuseppefloriomath/}

\author{M. Ligab\`o}
\address[Marilena Ligab\`o]{\newline
  Dipartimento di Matematica, Universit\`a di Bari,
  Via E.~Orabona 4,I--70125 Bari, Italy.}
\email[]{marilena.ligabo@uniba.it}

\author{F. Maddalena}
\address[Francesco Maddalena]{\newline
  Dipartimento di Meccanica, Matematica e Management, Politecnico di Bari,
  Via E.~Orabona 4,I--70125 Bari, Italy.}
\email[]{francesco.maddalena@poliba.it}
\urladdr{http://www.dimeg.poliba.it/index.php/it/profilo/userprofile/FMadda}

\date{\today}

\subjclass[2010]{35L05, 74B20, 35J25}

\keywords{Adhesion elasticity, wave equation, Neumann boundary conditions, dissipative solutions, well-posedness}

\thanks{G. M. Coclite and F. Maddalena are members of the Gruppo Nazionale per l'Analisi Matematica, la Probabilit\`a e le loro Applicazioni (GNAMPA) of the Istituto Nazionale di Alta Matematica (INdAM). G. Florio and M. Ligab\`o are supported by the Gruppo Nazionale per la Fisica Matematica  (GNFM) of the Istituto Nazionale di Alta Matematica (INdAM). G> Florio is supported by MIUR through the project VirtualMurgia.}

\begin{abstract} 
We study a 1D semilinear wave equation modeling the dynamic  of an elastic string interacting with a rigid substrate through an adhesive layer. The constitutive law of the adhesive material is assumed elastic up to a finite critical state,
beyond such a value the stress discontinuously drops to zero. Therefore   the semilinear equation is characterized by a source term presenting jump discontinuity. 
Well-posedness of the initial boundary value problem of Neumann type, as well as qualitative properties of the solutions are studied and the evolution of different initial conditions are numerically investigated.
\end{abstract}

\maketitle


\section{Introduction}
\label{sec:intro}
Adhesion, capillarity and wetting phenomena (see \cite{DBQ,KKR}) constitute a challenging arena for mathematical problems due to the complexity of physical mechanisms involved.  A rational understanding in the format of analytical descriptions of such problems,
in addition to being in itself interesting, 
is relevant for both life sciences and manufacturing engineering. In some recent papers (see, e.g., \cite{MP, MPPT, MPT, MPT1}) one of the authors has studied the static problem  of adhesion of elastic thin structures under various constitutive assumptions on the adhesive material. The main goal
of those works relies in characterizing, with the tools of the calculus of variations, the interplay of the occurrence of debonding with other constitutive properties.
The study of the evolution problem related to these physical manifestations require the analysis of multidimensional hyperbolic problems involving mathematical issues not yet well understood.
In this paper we address a prototypical dynamical problem by studying the adhesion of an elastic string glued to a rigid substrate, assuming a discontinuous softening behavior of the adhesive material, i.e. the adhesive stress jumps to zero when a critical value of the displacement is reached.
We consider the mechanical system with the following energy density:
\begin{equation}
\label{def:energydensity}
e[u]=  \frac{1}{2}\rho (\pt u)^2+  \frac{1}{2} K_e (\px u)^2 + \Phi(u),
\end{equation}
where $\rho>0$ denotes the mass density, $K_e$ denotes the elastic stiffness of the string, and $\Phi(u)$ denotes the adhesion potential modeling the energetic contribution of the glue layer. To taking into account the possibility of debonding we assume for the potential $\Phi$ a behavior like in Fig. \ref{fig:phipotential}, for example
\begin{equation}
\label{eq:Phi_intro}
\Phi(u)=\begin{cases}
u^2,&\qquad \text{if $|u|\le u^* $},\\
(u^*)^2,&\qquad \text{if $|u|> u^*$},
\end{cases}
\end{equation}
where $u^*$ denotes the threshold beyond which the glue cannot sustain further stress. 

\begin{figure}[t]
\begin{center}
\includegraphics[height=0.35\textwidth]{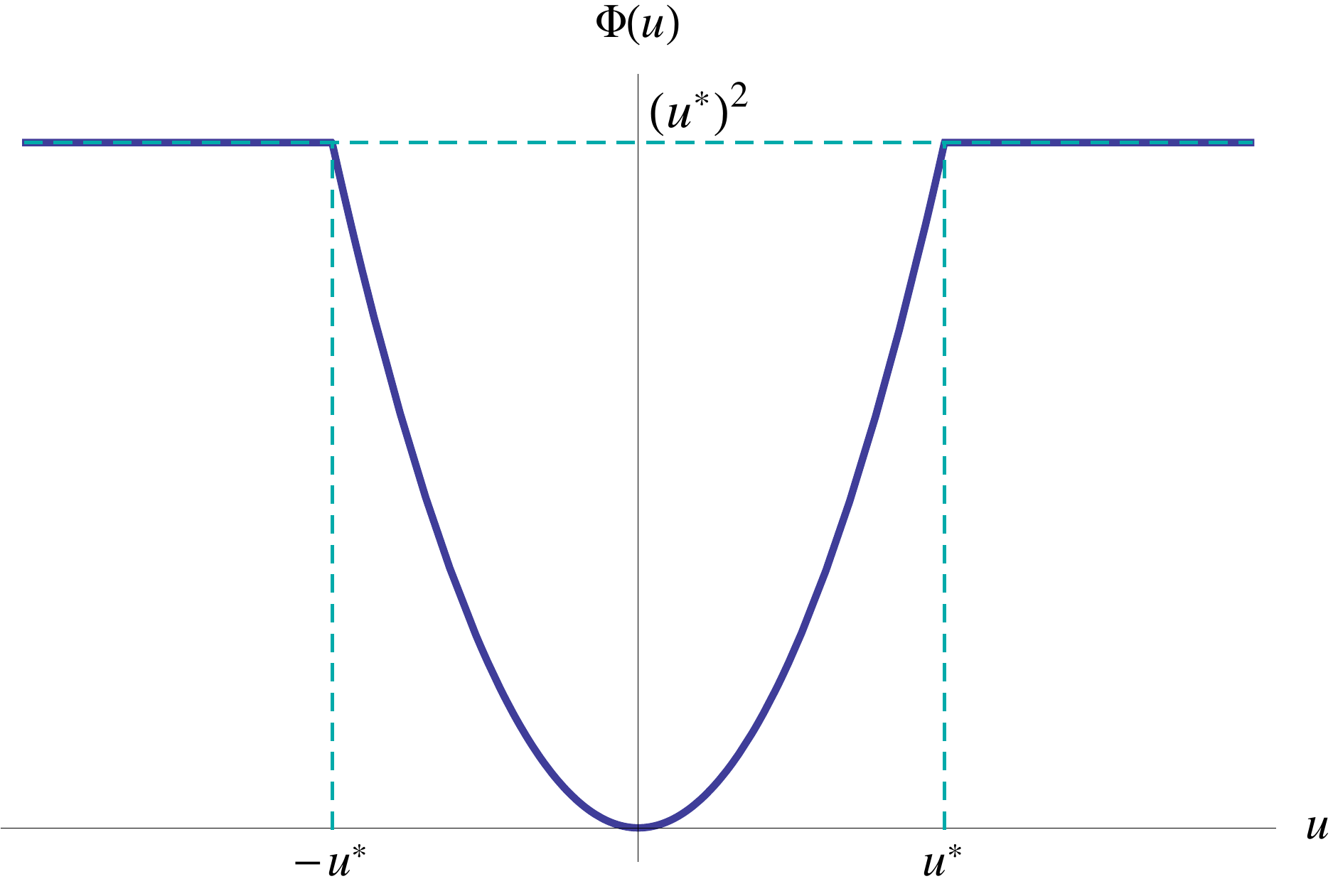} 
\caption{(Color online) Potential $\Phi(u)$ in Eq. \ref{eq:Phi_intro}.
}
\label{fig:phipotential}
\end{center}
\end{figure}

We are interested in the qualitative properties of the Euler equations associated to the above energy density \eqref{def:energydensity} given by
\begin{equation}
\label{eq:e0}
\rho \ptt u=K_e \pxx u-\Phi'\left(u\right),
\end{equation}
equipped with Neumann boundary conditions.

The paper is organized as follows. In Section \ref{sec:pb} we introduce the problem, the main assumptions and the associated energy. In Section \ref{sec:existence} we give the definition of dissipative solution, prove existence (cf. Theorem \ref{th:exist}), regularity (cf. Theorem \ref{th:regularity}), and non-uniqueness for the solutions of initial boundary value problem related to \eqref{eq:e0} (cf. Examples \ref{ex:1}, \ref{ex:2}, \ref{ex:3}). In Section \ref{disc} we focus on the first order formulation of the problem and investigate the interplay between debonding and propagation of singularities along characteristics (cf. Theorem \ref{th:sing}). Finally, in Section \ref{sec:numerics} we consider several initial conditions (in different classes of regularity), numerically investigate the evolutions and highlight peculiar behaviors of the propagation along the characteristics. 

\section{Statement of the problem}
\label{sec:pb}
Let us consider a one dimensional material body, i.e. a {\em string}, whose rest configuration
at the initial time $t=0$ coincides with the interval $[0,L]$ and the displacement field is denoted by 
$$
u:[0,\infty)\times[0,L]\rightarrow \R.
$$
The material is assumed linear elastic and, for sake of notational simplicity, the mass density $\rho$ and the extensional stiffness $K_e$ are assumed both equal to 1. The string interacts with an underlying rigid support through an infinitesimal layer of adhesive material characterized by an internal energy $u\mapsto\Phi(u)$ with the threshold $u^*$ set to 1.

The balance of momentum delivers 
 the initial boundary value problem 
\begin{equation}
\label{eq:el}
\begin{cases}
\ptt u=\pxx u-\Phi'\left(u\right),&\quad t>0,0<x<L,\\
\px u(t,0)=\px u(t,L)=0,&\quad t>0,\\
u(0,x)=u_0(x),&\quad 0<x<L,\\
\pt u(0,x)=u_1(x),&\quad 0<x<L.
\end{cases}
\end{equation}
We shall assume that
\begin{Assumptions}
\item \label{ass:phi} $\Phi\in C(\R)\cap C^1(\R\setminus\{1,-1\})$, $\Phi$ is constant in $(-\infty,-1]$ and in $[1,\infty)$, 
convex in $[-1,1]$, decreasing in $[-1,0]$
 and increasing in $[0,1]$;
\item \label{ass:init} $u_0\in H^1(0,L)$, $u_1\in L^2(0,L)$.
\end{Assumptions}

As a consequence of \ref{ass:phi}, $\Phi'$ has a jump discontinuity in $u=\pm1$ and
\begin{align*}
u\in(-\infty,-1)\cup(1,\infty)&\Rightarrow \Phi'(u)=0,\\
0<u<1&\Rightarrow 0<\Phi'(u)\le \lim_{u\to1^-}\Phi'(u),\\
-1<u<0&\Rightarrow 0>\Phi'(u)\ge \lim_{u\to-1^+}\Phi'(u).
\end{align*}
Assumption \ref{ass:phi}  characterizes the constitutive behavior of the adhesive material, i.e. when
$\vert u\vert =1$ the loss of adhesion manifests through the jump discontinuity of the stress $\Phi'$, hence debonding of the string occurs.

To fix ideas, a function satisfying such assumption is
\begin{equation}
\label{eq:Phi}
\Phi(u)=\begin{cases}
u^2,&\qquad \text{if $|u|\le 1$},\\
1,&\qquad \text{if $|u|> 1$}.
\end{cases}
\end{equation}
In particular we have
\begin{equation}
\label{eq:Phi'}
\Phi'(u)=\begin{cases}
2u,&\qquad \text{if $|u|\le 1$},\\
0,&\qquad \text{if $|u|>1$}.
\end{cases}
\end{equation}

The natural energy associated to the problem \eqref{eq:el} is 
\begin{equation}
\label{en}
E(t)=\int_0^L\left(\frac{(\pt u(t,x))^2+(\px u(t,x))^2}{2}+\Phi(u(t,x))\right)dx.
\end{equation}
Due to the lack of Lipschitz continuity in the nonlinear term $\Phi'$ we cannot expect the existence of conservative solutions, i.e., solutions that 
preserve the energy. This is coherent with the physic behind the problem, when our material is ungluing,
indeed in \cite[Sec. 3.3]{MPPT} the authors describe the hysteresis cycles and  the dissipation associated with the maximum delay strategy corresponding to the 
quasistatic evolution for  a discrete system where the  macroscopic limit (obtained by $\Gamma$-convergence) could be viewed as the system here analyzed. 
Moreover, even mathematically the dissipation of energy is natural. Indeed, when we study the compactness of some approximate solutions
we cannot have bounds on the second derivatives  because we cannot differentiate the equation in \eqref{eq:el}.
Therefore, we have to live with bounds on the first derivatives and then we can have only weak convergence in $H^1$.

\section{Well-posedness and regularity of weak solutions}
\label{sec:existence}

This section is dedicated to the well-posedness  and regularity analysis of \eqref{eq:el}.
We show the existence of Lipshitz continuous dissipative solutions. 
Some examples show that those solutions are not unique and do not depend  continuously on the initial conditions.
Indeed,  in the following section we shall focus on a qualitative analysis  of the discontinuity curves 
of the first derivatives of the solutions. These are the loci where the dissipation of energy occurs. Therefore, it seems quite natural to introduce  the concept of \emph{dissipative solution}:

\begin{definition}
\label{def:sol}
We say that a function $u:[0,\infty)\times[0,L]\to\R$ is a dissipative solution of \eqref{eq:el} if
\begin{itemize}
\item[($i$)] $u\in C([0,\infty)\times[0,L])$;
\item[($ii$)] $\pt u,\,\px u\in L^\infty(0,\infty;L^2(0,L))$;
\item[($iii$)] for every test function $\vfi\in C^\infty(\R^2)$ with compact support
\begin{equation}
\label{eq:weak}
\begin{split}
\int_0^\infty\int_0^L& \left(u\ptt \vfi+\px u\px \vfi+\Phi'\left(u\right)\vfi\right)dtdx\\
&-\int_0^L u_1(x)\vfi(0,x)dx+\int_\R u_0(x)\pt\vfi(0,x)dx=0;
\end{split}
\end{equation}
\item[($iv$)] (energy dissipation) for almost every $t>0$
\begin{equation}
\label{eq:energydissip}
\begin{split}
\int_0^L&\left(\frac{(\pt u(t,x))^2+(\px u(t,x))^2}{2}+\Phi(u(t,x))\right)dx\\
&\qquad\qquad\le \int_0^L\left(\frac{(u_1(x))^2+(u_0'(x))^2}{2}+\Phi(u_0(x))\right)dx.
\end{split}
\end{equation}
\end{itemize}
\end{definition}

\subsection{Existence}
\label{subsec:existence}
The main result of this subsection is the following.

\begin{theorem}[{\bf Existence}]
\label{th:exist}
Let $u_0$ and $u_1$ be given and assume \ref{ass:phi}, \ref{ass:init}. Then \eqref{eq:el} admits a weak solution in the sense of 
Definition \ref{def:sol}.
\end{theorem}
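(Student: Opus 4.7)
The plan is to build a dissipative solution as the limit of smooth approximations. Let $\eta_\eps$ be a standard mollifier and set $\Phi'_\eps=\Phi'*\eta_\eps$, with $\Phi_\eps$ an antiderivative normalized so that $\Phi_\eps\to\Phi$ uniformly. Assumption \ref{ass:phi} implies that $\Phi'$ is bounded and vanishes outside $[-1,1]$; hence $\Phi'_\eps$ is smooth, globally Lipschitz, uniformly bounded in $\eps$, and converges pointwise to $\Phi'$ on $\R\setminus\{\pm1\}$. For each $\eps>0$ the regularized semilinear problem
\[
\ptt u_\eps=\pxx u_\eps-\Phi'_\eps(u_\eps),
\]
with the Neumann boundary and initial data from \eqref{eq:el}, admits a unique global solution $u_\eps\in C([0,\infty);H^1(0,L))\cap C^1([0,\infty);L^2(0,L))$ by a standard Galerkin or contraction argument, since $\Phi'_\eps$ is Lipschitz and $(u_0,u_1)\in H^1\times L^2$.

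Multiplying the regularized equation by $\pt u_\eps$ produces the exact energy identity $E_\eps(t)=E_\eps(0)$, and $E_\eps(0)\to E(0)$ by uniform convergence of $\Phi_\eps$. Thus $\pt u_\eps$ and $\px u_\eps$ are uniformly bounded in $L^\infty(0,\infty;L^2(0,L))$, and via the 1D embedding $H^1(0,L)\hookrightarrow C^{0,1/2}([0,L])$ together with integration in time, $u_\eps$ is uniformly bounded in $L^\infty((0,T)\times(0,L))$ on each bounded time interval. A standard Aubin--Lions/Simon compactness argument (interpolating the uniform $H^1$ control in $x$ against the Lipschitz control of $u_\eps$ in $t$ with values in $L^2$) then produces a subsequence with $u_\eps\to u$ strongly in $C([0,T]\times[0,L])$, while $\pt u_\eps\weakstar\pt u$ and $\px u_\eps\weakstar\px u$ in $L^\infty(0,T;L^2(0,L))$. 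This already yields properties (i) and (ii) of Definition \ref{def:sol}.

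The linear terms in the weak formulation \eqref{eq:weak} written for $u_\eps$ pass to the limit by the above convergences. The nonlinear term $\Phi'_\eps(u_\eps)$ is uniformly bounded in $L^\infty$ and, since $u_\eps\to u$ uniformly and $\Phi'_\eps\to\Phi'$ pointwise on $\R\setminus\{\pm1\}$, dominated convergence identifies its limit with $\Phi'(u)$ on the open set $\{u\ne\pm1\}$; on the sticking set $\{u=\pm1\}$ one declares $\Phi'(\pm1)$ to be the (measurable) weak-$\star$ limit, which lies in the convex hull of the one-sided limits of $\Phi'$. For the dissipation \eqref{eq:energydissip} the natural route is to integrate $E_\eps(t)=E_\eps(0)$ on a slab $(t-h,t+h)$, use weak lower semicontinuity of the quadratic parts of $E$ together with the uniform convergence $\Phi_\eps(u_\eps)\to\Phi(u)$ to obtain $\frac{1}{2h}\int_{t-h}^{t+h}E(\tau)\,d\tau\le E(0)$, and then let $h\to 0^+$ and invoke Lebesgue's differentiation theorem.

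The genuinely delicate step is the identification of $\lim_\eps\Phi'_\eps(u_\eps)$ on the sticking set $\{u=\pm1\}$: depending on the approach direction of $u_\eps$, any element in the (Clarke) subdifferential of $\Phi$ at $\pm1$ can be produced, and this ambiguity is exactly what underlies the non-uniqueness phenomenon the paper announces right after the statement. For the sheer purpose of existence one selection is enough, and it is furnished automatically by the chosen mollification.
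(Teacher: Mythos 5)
Your construction is essentially the paper's: regularize the nonlinearity (the paper also smooths the data so as to work with classical solutions), use conservation of the approximate energy to get uniform $L^\infty(0,\infty;L^2)$ bounds on $\pt u_\eps$ and $\px u_\eps$, apply Simon's compactness theorem to obtain uniform convergence of $u_\eps$ together with weak convergence of the first derivatives, and pass to the limit in the weak formulation and, by lower semicontinuity plus averaging in $t$, in the energy inequality. The one delicate point you flag---identifying the limit of $\Phi'_\eps(u_\eps)$ on the sticking set $\{|u|=1\}$, where the weak-$\star$ limit need not coincide with any fixed pointwise value of $\Phi'(\pm1)$---is treated no more explicitly in the paper (which simply prescribes $\Phi_n'\to\Phi'$ pointwise and invokes the uniform convergence of $u_n$), so your proposal matches the paper's proof in both strategy and level of rigor.
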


Our argument is based on the approximation of the Neumann problem \eqref{eq:el}
with a sequence of Neumann problems with smooth source terms and smooth initial data.

Let $\{u_{0,n}\}_{n\in\N},\,\{u_{1,n}\}_{n\in\N}\subset C^\infty([0,L]),\,\{\Phi_n\}_{n\in\N}\subset C^\infty(\R)$ be sequences of smooth approximations of 
$u_0,\,u_1$, and $\Phi$ such that

\begin{equation}
\label{eq:assn}
\begin{split}
&u_{0,n}\to u_0\quad\text{in $H^1(0,L)$},\quad u_{1,n}\to u_1\quad\text{in $L^2(0,L)$},\quad \Phi_n\to \Phi \quad \text{uniformly in $\R$},\\
&\Phi_n'\to \Phi' \quad \text{pointwise in $\R$ and uniformly in $\R\setminus\left\{(-1-\eps,-1+\eps)\cup (1-\eps,1+\eps)\right\}$ for every $\eps$},\\
&|u|\ge1+\eps\Rightarrow \Phi_n'(u)=0,\qquad \eps>0,\> n\in\N,\\
&\norm{u_{0,n}}_{H^1(0,L)}\le C,\quad \norm{u_{1,n}}_{L^2(0,L)}\le C, \quad 0\le \Phi_n,\,\Phi_n'\le C,\qquad n\in\N,\\
&u_{0,n}'(0)=u_{0,n}'(L)=u_{1,n}(0)=u_{1,n}(L)=0,\qquad n\in\N,
\end{split}
\end{equation}
where  $C>0$ denotes some constant  independent on $n$.

Let $u_n$ be the unique classical solution of the initial boundary value problem
\begin{equation}
\label{eq:eln}
\begin{cases}
\ptt u_n=\pxx u_n-\Phi_n'(u_n),&\quad t>0,0<x<L,\\
\px u_n(t,0)=\px u_n(t,L)=0,&\quad t>0,\\
u_n(0,x)=u_{0,n}(x),&\quad 0<x<L,\\
\pt u_n(0,x)=u_{1,n}(x),&\quad 0<x<L.
\end{cases}
\end{equation}

\noindent The well-posedness of \eqref{eq:eln} is guaranteed for short time by the Cauchy-Kowaleskaya Theorem \cite{taylor}.
The solutions are indeed global in time thanks to the following a priori estimates.

\begin{lemma}[{\bf Energy conservation}]
\label{lm:energy}
The function 
\begin{equation*}
t\mapsto E_n(t)=\int_0^L\left(\frac{(\pt u_n(t,x))^2+(\px u_n(t,x))^2}{2}+\Phi_n(u_n(t,x))\right)dx
\end{equation*}
is constant for every $n$. In particular, $\{\pt u_n\}_{n\in\N}$ and $\{\px u_n\}_{n\in\N}$ are bounded in $L^\infty(0,\infty;L^2(0,L))$.
\end{lemma}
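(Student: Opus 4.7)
The plan is to verify the identity $\frac{d}{dt}E_n(t)=0$ by direct differentiation, using the smoothness of $u_n$ to justify passing the derivative inside the integral. Since $u_n$ is the classical solution to the regularized problem \eqref{eq:eln} with smooth data and smooth nonlinearity $\Phi_n$, the quantities $\pt u_n$, $\px u_n$, and $\Phi_n'(u_n)\pt u_n$ are all continuous, and $\ptt u_n$, $\ptx u_n$ exist classically, so standard differentiation under the integral applies.

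Differentiating and substituting $\ptt u_n = \pxx u_n - \Phi_n'(u_n)$ from \eqref{eq:eln} gives
\begin{equation*}
\frac{d}{dt}E_n(t) = \int_0^L\Bigl(\pt u_n\,\pxx u_n + \px u_n\,\ptx u_n\Bigr)dx,
\end{equation*}
since the two occurrences of $\Phi_n'(u_n)\pt u_n$ cancel. Integration by parts in the first term, together with the homogeneous Neumann condition $\px u_n(t,0)=\px u_n(t,L)=0$, turns $\int_0^L \pt u_n\,\pxx u_n\,dx$ into $-\int_0^L \ptx u_n\,\px u_n\,dx$, and the two remaining integrals cancel. Hence $E_n$ is constant in $t$, so $E_n(t)=E_n(0)$ for all $t\ge 0$.

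For the second claim, I would combine this conservation with the nonnegativity $\Phi_n\ge 0$ from \eqref{eq:assn} to obtain
\begin{equation*}
\tfrac{1}{2}\bigl(\|\pt u_n(t,\cdot)\|_{L^2(0,L)}^2+\|\px u_n(t,\cdot)\|_{L^2(0,L)}^2\bigr)\le E_n(0),
\end{equation*}
and then bound $E_n(0)$ uniformly in $n$ using the uniform control $\|u_{0,n}\|_{H^1}\le C$, $\|u_{1,n}\|_{L^2}\le C$, and $0\le \Phi_n\le C$ from \eqref{eq:assn}. This yields the desired $L^\infty(0,\infty;L^2(0,L))$ bounds.

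No real obstacle is expected: everything is classical because the approximating nonlinearity $\Phi_n$ is smooth and the solution $u_n$ is a Cauchy–Kowalevskaya-type classical solution. The only point to handle with care is the boundary integration by parts, which relies precisely on the Neumann conditions built into \eqref{eq:eln}; it is also what ensures that the approximation scheme is consistent with the dissipative framework, since later the limit $n\to\infty$ will only preserve an inequality in \eqref{eq:energydissip} rather than the identity available here.
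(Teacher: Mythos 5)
Your proof is correct and follows essentially the same route as the paper: differentiate $E_n$ under the integral, use the equation, and integrate by parts with the Neumann boundary conditions so the remaining terms cancel (the paper simply writes the integrand as $\pt u_n$ times the PDE after the same integration by parts). Your explicit derivation of the uniform $L^\infty(0,\infty;L^2(0,L))$ bounds from $\Phi_n\ge 0$ and the uniform bounds in \eqref{eq:assn} is exactly what the paper leaves implicit.
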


\begin{proof}
We have that
\begin{align*}
E_n'(t)=&\frac{d}{dt}\int_0^L\left(\frac{(\pt u_n)^2+(\px u_n)^2}{2}+\Phi_n(u_n)\right)dx\\
=&\int_0^L\left(\pt u_n\ptt u_n+\px u_n\ptx u_n+\Phi_n'(u_n)\pt (u_n)\right)dx\\
=&\int_0^L\pt u_n\underbrace{\left(\ptt u_n-\pxx u_n+\Phi_n'(u_n)\right)}_{=0}dx=0.
\end{align*}
\end{proof}

\begin{lemma}[{\bf $L^2$ estimate}]
\label{lm:l2}
The sequence $\{u_n\}_{n\in\N}$ is bounded in $L^\infty(0,T;L^2(0,L))$, for every $T>0$.
\end{lemma}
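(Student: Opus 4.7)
The plan is to exploit the fundamental theorem of calculus in time together with the energy bound from Lemma \ref{lm:energy}. Since each $u_n$ is a classical solution of \eqref{eq:eln}, we can write
\begin{equation*}
u_n(t,x)=u_{0,n}(x)+\int_0^t\pt u_n(s,x)\,ds,\qquad t\ge0,\>x\in[0,L].
\end{equation*}
Taking the $L^2$ norm in $x$ and applying Minkowski's integral inequality yields
\begin{equation*}
\norm{u_n(t,\cdot)}_{L^2(0,L)}\le \norm{u_{0,n}}_{L^2(0,L)}+\int_0^t\norm{\pt u_n(s,\cdot)}_{L^2(0,L)}\,ds.
\end{equation*}

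Next I would plug in the uniform bounds. By \eqref{eq:assn} the sequence $\{u_{0,n}\}$ is bounded in $H^1(0,L)\hookrightarrow L^2(0,L)$, so $\norm{u_{0,n}}_{L^2(0,L)}\le C$ uniformly in $n$. By Lemma \ref{lm:energy} and the uniform bounds on $u_{0,n},u_{1,n},\Phi_n$ in \eqref{eq:assn}, the initial energy $E_n(0)$ is bounded uniformly in $n$; since $E_n$ is conserved and $\Phi_n\ge0$, the sequence $\{\pt u_n\}_{n\in\N}$ is bounded in $L^\infty(0,\infty;L^2(0,L))$ by a constant $C$ independent of $n$. Combining these two facts gives, for every $t\in[0,T]$,
\begin{equation*}
\norm{u_n(t,\cdot)}_{L^2(0,L)}\le C+C\,t\le C(1+T),
\end{equation*}
which is precisely the claimed bound in $L^\infty(0,T;L^2(0,L))$.

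There is no serious obstacle here: the estimate is entirely a consequence of the already established control on the time derivative and the initial datum, and the argument does not require any further information on the (potentially singular) source term $\Phi_n'(u_n)$. An equivalent, slightly more automatic route would be to differentiate $\norm{u_n(t,\cdot)}_{L^2}^2$ in time, apply Cauchy--Schwarz to bound $2\int_0^L u_n\pt u_n\,dx$ by $\norm{u_n}_{L^2}^2+\norm{\pt u_n}_{L^2}^2$, and close via Grönwall's inequality, but the direct integral representation above is the cleaner path and yields the bound immediately.
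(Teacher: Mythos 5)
Your proof is correct and follows essentially the same route as the paper: both write $u_n(t,\cdot)$ as the initial datum plus the time integral of $\pt u_n$ and then invoke the uniform bound on $\pt u_n$ in $L^\infty(0,\infty;L^2(0,L))$ from Lemma \ref{lm:energy}. The only cosmetic difference is that you split the terms via Minkowski's integral inequality, whereas the paper uses $(a+b)^2\le 2a^2+2b^2$ followed by Cauchy--Schwarz in time.
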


\begin{proof}
Since
\begin{align*}
\int_0^L u_n^2(t,x)dx=&\int_0^L\left(u_{0,n}(x)+\int_0^t \ps u_n(s,x)ds\right)^2dx\\
\le&2\int_0^L u_{0,n}^2(x)dx+2 \int_0^L\left(\int_0^t |\ps u_n(s,x)|ds\right)^2dx\\
\le&2\int_0^L u_{0,n}^2(x)dx+2t \int_0^t\int_0^L (\ps u_n(s,x))^2dsdx\\
\le&2\int_0^L u_{0,n}^2(x)dx+2t^2 \sup_{s\ge0}\int_0^L (\ps u_n(s,x))^2dx,
\end{align*}
the claim follows from Lemma \ref{lm:energy}.
\end{proof}

\begin{lemma}[{\bf $L^\infty$ estimate}]
\label{lm:linfty}
The sequence $\{u_n\}_{n\in\N}$ is bounded in $L^\infty((0,T)\times(0,L))$, for every $T>0$.
\end{lemma}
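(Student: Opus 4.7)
The plan is to combine the two previous lemmas with the one-dimensional Sobolev embedding. Lemma \ref{lm:energy} yields a uniform bound on $\px u_n$ in $L^\infty(0,\infty;L^2(0,L))$, while Lemma \ref{lm:l2} yields a uniform bound on $u_n$ in $L^\infty(0,T;L^2(0,L))$ for each fixed $T>0$. Together these give a uniform bound on $u_n$ in $L^\infty(0,T;H^1(0,L))$.

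Next I would invoke the Sobolev embedding $H^1(0,L)\hookrightarrow C([0,L])\hookrightarrow L^\infty(0,L)$, which in one space dimension is a continuous injection with a constant depending only on $L$. Applied pointwise in $t\in(0,T)$ to $u_n(t,\cdot)$, this converts the $H^1$ bound into a bound
\[
\norm{u_n(t,\cdot)}_{L^\infty(0,L)} \le C\bigl(\norm{u_n(t,\cdot)}_{L^2(0,L)}+\norm{\px u_n(t,\cdot)}_{L^2(0,L)}\bigr),
\]
with $C=C(L)$ independent of $n$ and $t$. Taking the essential supremum over $t\in(0,T)$ and using the uniform $H^1$-in-$x$ bound completes the estimate.

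There is no real obstacle here; the only thing to keep an eye on is that the $L^2$ bound from Lemma \ref{lm:l2} is only local in time (it grows like $T$), so the resulting $L^\infty$ bound is likewise local in time, consistent with the statement. The energy bound controlling $\px u_n$ is, however, global in time thanks to Lemma \ref{lm:energy}, which is exactly why the $L^\infty$ bound grows only through the $L^2$ part.
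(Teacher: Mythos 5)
Your argument is correct and coincides with the paper's own proof: both combine Lemma \ref{lm:energy} and Lemma \ref{lm:l2} to get a uniform bound in $L^\infty(0,T;H^1(0,L))$ and then apply the one-dimensional embedding $H^1(0,L)\hookrightarrow L^\infty(0,L)$ with a constant depending only on $L$. Nothing is missing.
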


\begin{proof}
Fix $0<t<T$ and $0<x<L$.
Lemmas \ref{lm:energy} and \ref{lm:l2} imply that $\{u_n\}_{n\in\N}$ is bounded in $L^\infty(0,T;H^1(0,L))$. 
Since $H^1(0,L)\subset L^\infty(0,L)$ we have
\begin{equation*}
|u_n(t,x)|\le \norm{u_n(t,\cdot)}_{L^\infty(0,L)}\le c\norm{u_n(t,\cdot)}_{H^1(0,L)}
\le c\norm{u_n}_{L^\infty(0,T;H^1(0,L))},
\end{equation*}
for some constant $c>0$ dependeing only on $L$. Therefore 
\begin{equation*}
\norm{u_n}_{L^\infty((0,T)\times(0,L))}\le c\norm{u_n}_{L^\infty(0,T;H^1(0,L))},
\end{equation*}
that gives the claim.
\end{proof}

\begin{proof}[Proof of Theorem \ref{th:exist}]
Thanks to Lemmas \ref{lm:energy}, \ref{lm:l2} and \cite[Theorem 5]{S}
 there exists a function $u$ satisfying ($i$) and ($ii$) of Definition \ref{def:sol} such that,
passing to a subsequence,
\begin{equation}
\label{eq:conv}
\begin{split}
& u_n\weak u \quad \text{in $H^1((0,T)\times(0,L))$, for each $T\ge0$},\\ 
& u_n \to u \quad \text{in $L^\infty((0,T)\times(0,L))$, for each $T\ge0$}.
\end{split}
\end{equation}

We have to verify that $u$ is a weak solution of \eqref{eq:el}.
Let $\vfi\in C^\infty(\R^2)$ be a test function with compact support. From \eqref{eq:eln}, for every $n$ we have
\begin{align*}
\int_0^\infty\int_0^L& \left(u_n\ptt \vfi+\px u_n\px \vfi+\Phi_n'(u_n)\vfi\right)dtdx\\
&-\int_0^L u_{1,n}(x)\vfi(0,x)dx+\int_\R u_{0,n}(x)\pt\vfi(0,x)dx=0.
\end{align*}
As $n\to\infty$, using \eqref{eq:assn} and \eqref{eq:conv}, we get \eqref{eq:weak}.

Finally, \eqref{eq:energydissip} follows from Lemma \ref{lm:energy}, \eqref{eq:assn}, and \eqref{eq:conv}.
\end{proof}

\subsection{Uniqueness}
\label{subsec:uniqueness}
The dissipative solutions of \eqref{eq:el} are not unique.
This is made clear form the following three examples.
In the first example, we show that different regularizations of the discontinuous nonlinear term $\Phi'$ may lead  to 
different dissipative solutions of \eqref{eq:el}.
In the second example, we use only one regularization of $\Phi'$ and approximate the initial conditions in two different ways.
Lastly, the third example shows  that the solutions of \eqref{eq:el} do not continuously depend on the initial data.
Moreover, it seem quite difficult to identify a common asymptotic behavior as $t\to\infty$.

In all the following examples we assume that $\Phi$ is the one defined in \eqref{eq:Phi}.

\begin{example}
\label{ex:1}
Let $\eps>0$. Consider the functions
\begin{align*}
\widetilde\Phi_\eps(u)&=\begin{cases}
u^2,&\quad \text{if $|u|\le 1-\eps,$}\\
\frac{2u-u^2}{\eps}-(1-\eps)\left(\eps+\frac{1}{\eps}\right),&\quad  \text{if $1-\eps\le u\le 1,$}\\
-\frac{2u+u^2}{\eps}-(1-\eps)\left(\eps+\frac{1}{\eps}\right),&\quad  \text{if $-1\le u\le -1+\eps,$}\\
1+\eps^2-\eps,&\quad  \text{if $|u|\ge 1,$}
\end{cases}\\
\overline{\Phi}_\eps(u)&=\begin{cases}
u^2,&\quad  \text{if $|u|\le 1,$}\\
\frac{2(1+\eps)u-u^2}{\eps}-\left(1+\frac{1}{\eps}\right),&\quad  \text{if $1\le u\le 1+\eps,$}\\
-\frac{2(1+\eps)u+u^2}{\eps}-\left(1+\frac{1}{\eps}\right),&\quad  \text{if $-1-\eps\le u\le -1,$}\\
1+\eps,&\quad  \text{if $|u|\ge 1+\eps.$}
\end{cases}
\end{align*}
We have
\begin{align*}
\widetilde\Phi_\eps'(u)&=\begin{cases}
2u,&\quad  \text{if $|u|\le 1-\eps,$}\\
2\frac{1-u}{\eps},&\quad  \text{if $1-\eps\le u\le 1,$}\\
-2\frac{1+u}{\eps},&\quad  \text{if $-1\le u\le -1+\eps,$}\\
0,&\quad  \text{if $|u|\ge 1,$}
\end{cases}\\
\overline{\Phi}_\eps'(u)&=\begin{cases}
2u,&\quad  \text{if $|u|\le 1,$}\\
2\frac{1+\eps-u}{\eps},&\quad  \text{if $1\le u\le 1+\eps,$}\\
-2\frac{1+\eps+u}{\eps},&\quad  \text{if $-1-\eps\le u\le -1,$}\\
0,&\quad  \text{if $|u|\ge 1+\eps.$}
\end{cases}
\end{align*}

The functions 
\begin{equation*}
\widetilde u_\eps(t,x)=1,\qquad \overline{u}_\eps(t,x)=\cos\left(\sqrt{2}\,t\right),
\end{equation*}
solve
\begin{align}
\label{eq:ex.1.1}
&\begin{cases}
\ptt \widetilde u_\eps=\pxx \widetilde u_\eps-\widetilde \Phi_\eps'(\widetilde u_\eps),&\quad t>0,0<x<L,\\
\px \widetilde u_\eps(t,0)=\px \widetilde u_\eps(t,L)=0,&\quad t>0,\\
\widetilde u_\eps(0,x)=1,&\quad 0<x<L,\\
\pt \widetilde u_\eps(0,x)=0,&\quad 0<x<L,
\end{cases}
\\&
\label{eq:ex.1.2}
\begin{cases}
\ptt \overline{u}_\eps=\pxx \overline{u}_\eps-\overline{\Phi}_\eps'(\overline{u}_\eps),&\quad t>0,0<x<L,\\
\px \overline{u}_\eps(t,0)=\px \overline{u}_\eps(t,L)=0,&\quad t>0,\\
\overline{u}_\eps(0,x)=1,&\quad 0<x<L,\\
\pt \overline{u}_\eps(0,x)=0,&\quad 0<x<L.
\end{cases}
\end{align}
As $\eps\to0$ we have
\begin{equation*}
\widetilde u_\eps(t,x)\to \widetilde u(t,x)=1,\qquad \overline{u}_\eps(t,x)\to \overline{u}(t,x)=\cos\left(\sqrt{2}\,t\right),
\end{equation*}
and $\widetilde u$ and $\overline{u}$ provide two different solutions of \eqref{eq:el} in correspondence of the initial data
\begin{equation*}
u_0(x)=1,\qquad u_1(x)=0.
\end{equation*}

The energies associated to \eqref{eq:ex.1.1} and \eqref{eq:ex.1.2} are 
\begin{align*}
\widetilde E_\eps(t)&=\int_0^L\left(\frac{(\pt \widetilde u_\eps(t,x))^2+(\px \widetilde u_\eps(t,x))^2}{2}+\widetilde \Phi_\eps(\widetilde u_\eps(t,x))\right)dx=(1+\eps^2-\eps)L,\\
\overline{E}_\eps(t)&=\int_0^L\left(\frac{(\pt \overline{u}_\eps(t,x))^2+(\px \overline{u}_\eps(t,x))^2}{2}+\overline{\Phi}_\eps(\overline{u}_\eps(t,x))\right)dx=L,
\end{align*}
respectively.
\end{example}

\begin{example}
\label{ex:2}
Let $\eps>0$. Consider the function
\begin{equation*}
\Phi_\eps(u)=\begin{cases}
\frac{2-\eps}{2}u^2,&\quad  \text{if $|u|\le 1,$}\\
\frac{2-\eps}{\eps}\left((1+\eps)\left(u-\frac{1}{2}\right)-\frac{u^2}{2}\right),&\quad  \text{if $1\le u\le 1+\eps,$}\\
\frac{\eps-2}{\eps}\left((1+\eps)\left(u+\frac{1}{2}\right)+\frac{u^2}{2}\right),&\quad  \text{if $-1-\eps\le u\le -1,$}\\
\frac{(2-\eps)(1+\eps)}{2},&\quad  \text{if $ |u|\ge 1+\eps.$}
\end{cases}
\end{equation*}
We have
\begin{equation*}
\Phi_\eps'(u)=\begin{cases}
(2-\eps)u,&\quad  \text{if $|u|\le 1,$}\\
\frac{2-\eps}{\eps}(1+\eps-u),&\quad  \text{if $1\le u\le 1+\eps,$}\\
\frac{\eps-2}{\eps}(1+\eps+u),&\quad  \text{if $-1-\eps\le u\le -1,$}\\
0,&\quad  \text{if $|u|\ge 1+\eps.$}
\end{cases}
\end{equation*}
The functions 
\begin{equation*}
u_\eps(t,x)=(1-\eps)\cos\left(\sqrt{2-\eps}\,t\right),\qquad v_\eps(t,x)=1+\eps
\end{equation*}
solve
\begin{align}
\label{eq:ex.2.1}
&\begin{cases}
\ptt u_\eps=\pxx u_\eps-\Phi_\eps'(u_\eps),&\quad t>0,0<x<L,\\
\px u_\eps(t,0)=\px u_\eps(t,L)=0,&\quad t>0,\\
u_\eps(0,x)=1-\eps,&\quad 0<x<L,\\
\pt u_\eps(0,x)=0,&\quad 0<x<L,
\end{cases}
\\&
\label{eq:ex.2.2}
\begin{cases}
\ptt v_\eps=\pxx v_\eps-\Phi_\eps'(v_\eps),&\quad t>0,0<x<L,\\
\px v_\eps(t,0)=\px v_\eps(t,L)=0,&\quad t>0,\\
v_\eps(0,x)=1+\eps,&\quad 0<x<L,\\
\pt v_\eps(0,x)=0,&\quad 0<x<L.
\end{cases}
\end{align}
As $\eps\to0$ we have
\begin{equation*}
u_\eps(t,x)\to u(t,x)=\cos\left(\sqrt{2}\,t\right),\qquad v_\eps(t,x)\to v(t,x)=1,
\end{equation*}
and $u$ and $v$ provides two different solutions of \eqref{eq:el} in correspondence of the initial data
\begin{equation*}
u_0(x)=1,\qquad u_1(x)=0.
\end{equation*}
The energies associated to \eqref{eq:ex.2.1} and \eqref{eq:ex.2.2} are 
\begin{align*}
E_\eps(t)&=\int_0^L\left(\frac{(\pt u_\eps(t,x))^2+(\px u_\eps(t,x))^2}{2}+ \Phi_\eps(u_\eps(t,x))\right)dx=\frac{(2-\eps)(1-\eps)^2}{2}L,\\
\mathcal{E}_\eps(t)&=\int_0^L\left(\frac{(\pt v_\eps(t,x))^2+(\px v_\eps(t,x))^2}{2}+\Phi_\eps(v_\eps(t,x))\right)dx=\frac{(2-\eps)(1+\eps)}{2}L,
\end{align*}
respectively.
\end{example}

\begin{example}
\label{ex:3}
For every $\eps>0$, the solutions $u_\eps$ and $v_\eps$ of the two following problems
\begin{align}
\label{eq:ex.3.1}
&\begin{cases}
\ptt u_\eps=\pxx u_\eps-\Phi'(u_\eps),&\quad t>0,0<x<L,\\
\px u_\eps(t,0)=\px u_\eps(t,L)=0,&\quad t>0,\\
u_\eps(0,x)=1+\eps,&\quad 0<x<L,\\
\pt u_\eps(0,x)=\eps,&\quad 0<x<L,
\end{cases}
\\&
\label{eq:ex.3.2}
\begin{cases}
\ptt v_\eps=\pxx v_\eps-\Phi'(v_\eps),&\quad t>0,0<x<L,\\
\px v_\eps(t,0)=\px v_\eps(t,L)=0,&\quad t>0,\\
v_\eps(0,x)=1-\eps,&\quad 0<x<L,\\
\pt v_\eps(0,x)=0,&\quad 0<x<L,
\end{cases}
\end{align}
are
\begin{equation*}
u_\eps(t,x)=\eps t+1+\eps,\qquad 
v_\eps(t,x)=(1-\eps)\cos(\sqrt{2}t).
\end{equation*}
We have
\begin{align*}
&\norm{u_\eps(0,\cdot)-v_\eps(0,\cdot)}_{L^2(0,L)}+\norm{\pt u_\eps(0,\cdot)-\pt v_\eps(0,\cdot)}_{L^2(0,L)}=3\eps\sqrt{L},\\
&\lim_{t\to\infty}u_\eps(t,x)=\infty,\qquad \limsup_{t\to\infty}v_\eps(t,x)=1-\eps.
\end{align*}
Moreover, as $\eps\to0$,
\begin{equation*}
u_\eps(t,x)\to 1,\qquad 
v_\eps(t,x)\to \cos(\sqrt{2}t).
\end{equation*}
The energies associated to \eqref{eq:ex.3.1} and \eqref{eq:ex.3.2} are 
\begin{align*}
E_\eps(t)&=\int_0^L\left(\frac{(\pt u_\eps(t,x))^2+(\px u_\eps(t,x))^2}{2}+ \Phi(u_\eps(t,x))\right)dx=\frac{\eps^2+2}{2}L,\\
\mathcal{E}_\eps(t)&=\int_0^L\left(\frac{(\pt v_\eps(t,x))^2+(\px v_\eps(t,x))^2}{2}+\Phi(v_\eps(t,x))\right)dx=(1-\eps)^2L,
\end{align*}
respectively.
\end{example}

\subsection{Regularity}
\label{subsec:reg}

This subsection is devoted to the maximal regularity we can expect for the dissipative solutions of \eqref{eq:el}.
We show that if the $t$ and $x$ derivative of the solutions at time $t=0$ are bounded then we have 
locally Lipshitz continuous solutions. In the following section, using a first order formulation of \eqref{eq:el} we will  show that we cannot expct more regularity even if we
consider more regular inital data. 

\begin{theorem}
\label{th:regularity}
Let $u_0$ and $u_1$ be given and assume \ref{ass:phi}, \ref{ass:init}.
If $u$ is a dissipative solution of \eqref{eq:el} and 
\begin{equation}
\label{eq:regass}
u_0\in W^{1,\infty}(0,L),\qquad u_1\in L^{\infty}(0,L),
\end{equation}
then
\begin{equation}
\label{eq:regclaim}
u\in C([0,\infty)\times[0,L])\cap W^{1,\infty}((0,T)\times(0,L)),
\end{equation}
for every $T>0$.
\end{theorem}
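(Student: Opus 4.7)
The plan is to exploit that the right-hand side of the PDE is uniformly bounded, reducing matters to a linear inhomogeneous wave equation whose solution admits an explicit d'Alembert--Duhamel representation. By \ref{ass:phi}, $\Phi'$ is continuous on $[-1,1]$ and vanishes outside $[-1,1]$, so $M:=\sup_{\R}|\Phi'|<\infty$. Hence, for any dissipative solution $u$, the source $f(t,x):=\Phi'(u(t,x))$ lies in $L^\infty((0,\infty)\times(0,L))$ with $\|f\|_{L^\infty}\le M$; this bound is independent of which dissipative solution has been selected, which is crucial in view of the non-uniqueness documented in Examples \ref{ex:1}--\ref{ex:3}.

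The next step is to reduce the Neumann problem on $(0,L)$ to a Cauchy problem on all of $\R$. I would even-reflect $u_0$, $u_1$ and $f(t,\cdot)$ about $x=0$ and $x=L$ and then extend $2L$-periodically, denoting the resulting objects by $\tilde u_0, \tilde u_1, \tilde f$. The even symmetry makes $\px$ of any even extension odd about $\{0,L\}$, so the Neumann condition is built in, and the even extension $\tilde u$ of $u$ weakly satisfies $\ptt \tilde u-\pxx \tilde u=-\tilde f$ on $\R\times(0,\infty)$ with Cauchy data $(\tilde u_0,\tilde u_1)$. By the standard uniqueness of finite-energy weak solutions of the one-dimensional linear wave equation with $L^\infty$ forcing, $\tilde u$ must coincide with the d'Alembert--Duhamel function
\begin{equation*}
\tfrac{1}{2}\bigl[\tilde u_0(x+t)+\tilde u_0(x-t)\bigr]+\tfrac{1}{2}\int_{x-t}^{x+t}\tilde u_1(y)\,dy-\tfrac{1}{2}\int_0^t\int_{x-(t-s)}^{x+(t-s)}\tilde f(s,y)\,dy\,ds.
\end{equation*}

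Differentiating this identity in $x$ and in $t$ yields expressions each built from pointwise values of $\tilde u_0'$ and $\tilde u_1$ (both in $L^\infty$ by \eqref{eq:regass}) plus a single time integral of differences/sums of $\tilde f$ (bounded by $M$). One then reads off, for a.e.\ $(t,x)\in(0,T)\times(0,L)$,
\begin{equation*}
|\px u(t,x)|+|\pt u(t,x)|\le 2\|u_0'\|_{L^\infty(0,L)}+2\|u_1\|_{L^\infty(0,L)}+2MT,
\end{equation*}
which, combined with the continuity furnished by item $(i)$ of Definition \ref{def:sol}, yields \eqref{eq:regclaim}.

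The main obstacle is the uniqueness step that lets one identify $u$ with the explicit representation. Starting only from the weak formulation \eqref{eq:weak} with an $L^\infty$ source, one must show that the Cauchy data determine $u$; this is a classical fact for the 1D linear wave equation (e.g.\ by mollification in time followed by an energy estimate on the difference of two candidates, together with the reflection argument above to handle the Neumann condition). Once that is in hand, the rest is straightforward bookkeeping with the d'Alembert--Duhamel formula.
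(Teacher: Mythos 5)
Your proposal is correct and follows essentially the same route as the paper: even reflection about the endpoints plus $2L$-periodic extension to convert the Neumann problem into a Cauchy problem, freezing the bounded source $\Phi'(u)$, and invoking the d'Alembert--Duhamel representation. The only cosmetic difference is that you differentiate the representation formula to bound $\pt u$ and $\px u$ almost everywhere, whereas the paper estimates the difference quotients $|\widetilde u(t,x)-\widetilde u(t',x')|$ directly to obtain the Lipschitz constant; you also make explicit the identification step (uniqueness for the linear wave equation with frozen $L^\infty$ forcing) that the paper asserts without comment.
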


\begin{proof}
Let $u$ be a solution of \eqref{eq:el}.
Consider the function $\widetilde u:[0,\infty)\times\R\to\R$ defined as the $2L-$periodic (in space) extension of the function $(t,x)\in[0,\infty)\times[-L,L]\mapsto u(t,|x|)$.
$\widetilde u$ is the unique solution of the Cauchy Problem
\begin{equation}
\label{eq:elu}
\begin{cases}
\ptt v=\pxx v-\Phi'(\widetilde u),&\quad t>0,x\in\R,\\
v(0,x)=\widetilde u_0(x),&\quad x\in\R,\\
\pt v(0,x)=\widetilde u_1(x),&\quad x\in\R,
\end{cases}
\end{equation}
where $\widetilde u_0$ and $\widetilde u_1$ are the $2L-$periodic  extensions of the functions $x\in[-L,L]\mapsto u_0(|x|)$ and $x\in[-L,L]\mapsto u_1(|x|)$, 
respectively.
Therefore, the following representation formula holds
\begin{equation}
\label{eq:u}
\widetilde u(t,x)=\frac{\widetilde u_0(x+t)+\widetilde u_0(x-t)}{2}+\frac{1}{2}\int_{x-t}^{x+t}\widetilde u_1(s)ds
+\frac{1}{2}\int_0^t\int_{x-(t-s)}^{x+(t-s)}\Phi'(\widetilde u(s,y))dsdy.
\end{equation}

We have that
\begin{align*}
|\widetilde u(t,x)&-\widetilde u(t',x')|\\
=&\frac{|\widetilde u_0(x+t)-\widetilde u_0(x'+t')|+|\widetilde u_0(x-t)-\widetilde u_0(x'-t')|}{2}\\
&+\frac{1}{2}\left|\int_{x-t}^{x+t}\widetilde u_1(s)ds-\int_{x'-t}^{x'+t}\widetilde u_1(s)ds\right|
+\frac{1}{2}\left|\int_{x'-t}^{x'+t}\widetilde u_1(s)ds-\int_{x'-t'}^{x'+t'}\widetilde u_1(s)ds\right|\\
&+\frac{1}{2}\left|\int_0^t\int_{x-(t-s)}^{x+(t-s)}\Phi'(\widetilde u(s,y))dsdy-\int_0^{t'}\int_{x-(t-s)}^{x+(t-s)}\Phi'(\widetilde u(s,y))dsdy\right|\\
&+\frac{1}{2}\left|\int_0^{t'}\int_{x-(t-s)}^{x+(t-s)}\Phi'(\widetilde u(s,y))dsdy-\int_0^{t'}\int_{x-(t'-s)}^{x+(t'-s)}\Phi'(\widetilde u(s,y))dsdy\right|\\
&+\frac{1}{2}\left|\int_0^{t'}\int_{x-(t'-s)}^{x+(t'-s)}\Phi'(\widetilde u(s,y))dsdy-\int_0^{t'}\int_{x'-(t'-s)}^{x'+(t'-s)}\Phi'(\widetilde u(s,y))dsdy\right|\\
\le&\left(\norm{\widetilde u_0'}_{L^\infty(\R)}+\frac{\norm{\widetilde u_1}_{L^\infty(\R)}}{2}+\frac{3}{2}\norm{\Phi'}_{L^\infty(\R)}(t+t')\right)\left(|x-x'|+|t-t'|\right).
\end{align*}
Thanks to \eqref{eq:regass} we have 
\begin{equation}
\label{eq:regclaim.1}
\widetilde u\in C([0,\infty)\times \R)\cap W^{1,\infty}((0,T)\times\R),\qquad T>0,
\end{equation}
and then \eqref{eq:regclaim}.
\end{proof}

The following simple example shows that we cannot expect $C^2$ regularity on the  solutions.
More precisely, we start with constant initial data and we explicitly construct conservative solutions 
exhibiting  a singularity in the second derivative. The insurgence of such singularity is due to the lack of continuity of the 
nonlinear source $\Phi'$.

\begin{example}
\label{ex:reg}
Consider the function
\begin{equation}
\label{eq:reg.1}
u(t,x)=
\begin{cases}
\sqrt{2}\sin(\sqrt{2}t),&\qquad \text{if $0\le t\le \frac{\pi}{4\sqrt{2}}$},\\
\sqrt{2} t+1-\frac{\pi}{4},&\qquad \text{if $t\ge \frac{\pi}{4\sqrt{2}}$}.
\end{cases}
\end{equation}
Clearly, $u$ solves the problem
\begin{equation*}
\begin{cases}
\ptt u=\pxx u-\Phi'(u),&\quad t>0,x\in\R,\\
u(0,x)=0,&\quad x\in\R,\\
\pt u(0,x)=2,&\quad x\in\R,
\end{cases}
\end{equation*}
but 
\begin{equation*}
u\in C^1\setminus C^2.
\end{equation*}
Indeed
\begin{align*}
\lim_{t \to \frac{\pi}{4\sqrt{2}}^- }u\left(t,x\right)=1,\qquad & \lim_{t \to \frac{\pi}{4\sqrt{2}}^+}u\left(t,x\right)=1,\\
\lim_{t \to \frac{\pi}{4\sqrt{2}}^-}\pt u\left(t,x\right)=\sqrt{2},\qquad & \lim_{t \to \frac{\pi}{4\sqrt{2}}^+} \pt u\left(t,x\right)=\sqrt{2},\\
\lim_{t \to \frac{\pi}{4\sqrt{2}}^-}\ptt u\left(t,x\right)=-2,\qquad & \lim_{t \to\frac{\pi}{4\sqrt{2}}^+ } \ptt u\left(t,x\right)=0.
\end{align*}
The energy associated to \eqref{eq:reg.1} is
\begin{equation*}
E(t)=\int_0^L\left(\frac{(\pt u_\eps(t,x))^2+(\px u_\eps(t,x))^2}{2}+ \Phi(u_\eps(t,x))\right)dx=2L.
\end{equation*}
\end{example}

\section{Discontinuities, debonding and propagation of singularities}
\label{disc}
In this section we shall focus  on some qualitative analysis aimed to investigate the occurence of singularities in the solutions
of  \eqref{eq:el} and the interplay of such singularities with debonding process. 
Based on a first order system associated to \eqref{eq:el}, we give a qualitative description of the discontinuity curves 
of the first derivatives of the solutions. These are the loci where the dissipation of energy occurs. 
Moreover, 
 we show that we cannot expct more regularity even if we
consider more regular inital data.

We can rewrite the equation in \eqref{eq:el} as a first order system in the following way
\begin{equation}
\label{eq:system}
\pt Z+A\px Z=B(Z),
\end{equation}
where
\begin{equation*}
Z=\left(\begin{matrix}z_1\\z_2\\z_3\end{matrix}\right)=\left(\begin{matrix}\pt u\\\px u\\u\end{matrix}\right),\qquad
A=\left(\begin{matrix}0&-1&0\\-1&0&0\\0&0&0\end{matrix}\right), \qquad B(Z)=\left(\begin{matrix}-\Phi'(z_3)\\0\\z_1\end{matrix}\right).
\end{equation*}

Since $Z\mapsto B(Z)$ is discontinuous the solution $Z$ of \eqref{eq:system} may develop discontinuities.
Let  $t\mapsto(t,\gamma(t))$ be a  discontinuity curve for $Z$.
Thanks to the qualitative analysis of \cite[Chapter 10]{B} $\gamma$ is locally Lipschitz continuous and
 the Rankine-Hugoniot condition \cite[Section 4.2]{B} holds
 \begin{equation*}
A\left(Z(t,\gamma(t)^+)-Z(t,\gamma(t)^-)\right)=\gamma'(t)\left(Z(t,\gamma(t)^+)-Z(t,\gamma(t)^-)\right),\qquad\text{a.e. $t$},
\end{equation*}
where
\begin{equation*}
Z(t,\gamma(t)^{\pm})=\lim_{s \to \gamma(t)^\pm} Z(t,s).
\end{equation*}
Since the eigenvalues of the matrix $A$ are $-1,\,0$ and $1$, we must have
\begin{equation*}
\gamma'(t)\in\{-1,0,1\},\qquad\text{a.e. $t$},
\end{equation*}
namely $t\mapsto(t,\gamma(t))$ is a polygonal  of the plane $(t,x)$ with slopes $-1,\,0$ and $1$.

Several remarks are needed.
In addition to the propagation velocities $1,\ -1$ of the wave equation here we have one more characteristic speed.
This feature is coherent with the one obtained in \cite{Be}. There the appearance of the stationary characteristics 
was generated by a third order hyperbolic operator  and a smooth nonlinear source term $f(u)$ in one spatial dimension.
In \cite{RR} the authors completed the picture showing that if the operator is of the second order and 
the nonlinear source term $f(u)$ is smooth we can only have two characteristic speeds. 
Here, we  are able to obtain the third characteristic speed even with
a second order wave operator because our nonlinear source term $\Phi'(u)$ is discontinuous. 

System \eqref{eq:system} admits the following entropy/entropy flux pair
\begin{equation}
\label{eq:entr}
\eta(Z)=\frac{|Z|^2}{2},\qquad q(Z)=-z_1z_2,\qquad Z=\left(\begin{matrix}z_1\\z_2\\z_3\end{matrix}\right)\in \R^3.
\end{equation}
Coherently with Definition \ref{def:sol}, the solutions of \eqref{eq:system} satisfy the following entropy inequality
\begin{equation}
\label{eq:entr-in}
\pt\eta(Z)+\px q(Z)\le \eta'(Z)B(Z),
\end{equation}
in the sense of distributions. When a shock occurs the inequality in \eqref{eq:entr-in} becomes strict. 
Indeed we consider dissipative solutions \cite{Se}.

The interplay between the propagation of singularities and debonding is described by the 
following necessary condition relating the singular points in space-time with the occurrence of attachment-debonding in the characteristic cone. 

\begin{theorem}
\label{th:sing}
Let $u$ be a dissipative solution of \eqref{eq:el} and $(t_0,x_0)\in (0,\infty)\times(0,L)$. We have that
 if $u$ is not $C^1$ in $(t_0,x_0)$ then for all $\eps>0$ there exist $(t_1,x_1),\,(t_2,x_2)\in \mathcal{T}_\eps(t_0,x_0)$ such that 
$|u(t_1,x_1)|<1<|u(t_2,x_2)|$,
where
\begin{equation*}
\mathcal{T}_\eps(t_0,x_0)=\bigcup_{\max\{t_0-\eps,0\}\le t\le t_0}\Big(\max\{0,x_0-\eps +(t-t_0)\},\min\{x_0+\eps-(t-t_0),L\}\Big).
\end{equation*}
\end{theorem}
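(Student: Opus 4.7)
The plan is to argue by contrapositive, leveraging the Rankine--Hugoniot framework just developed. Suppose $u$ is not $C^1$ at $(t_0,x_0)$. By Theorem \ref{th:regularity} the component $u=z_3$ of $Z=(\pt u,\px u,u)^T$ is continuous, so the jump vector $[Z]$ at $(t_0,x_0)$ has vanishing third component; the Rankine--Hugoniot relation $A[Z]=\gamma'(t)[Z]$ then forces $[Z]$ to be parallel to either $(1,-1,0)^T$ or $(1,1,0)^T$, and the associated discontinuity curve $\gamma$ to have slope $+1$ or $-1$ through $(t_0,x_0)$. In particular, a nontrivial jump of $(\pt u,\px u)$ is propagated along $\gamma$ in a neighborhood of $(t_0,x_0)$.

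For any $\eps>0$, the backward $\eps$-piece of $\gamma$ emanating from $(t_0,x_0)$ lies inside $\mathcal{T}_\eps(t_0,x_0)$: this set is the backward domain of dependence of $\{t_0\}\times(x_0-\eps,x_0+\eps)$, hence contains both characteristic rays of slopes $\pm 1$ issued back in time from $(t_0,x_0)$; moreover, sufficiently thin open transverse neighborhoods on either lateral side of this $\gamma$-segment still lie inside $\mathcal{T}_\eps$.

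The crux is to show, by contradiction, that one lateral side of $\gamma$ must meet $\{|u|<1\}$ while the other meets $\{|u|>1\}$. Assume instead that in some $\mathcal{T}_{\eps_0}(t_0,x_0)$ the solution stays in a single regime -- say $|u|\le 1$ throughout, the opposite case being symmetric. By \ref{ass:phi} the restriction $\Phi'|_{[-1,1]}$ is Lipschitz; composing with the Lipschitz $u$ from Theorem \ref{th:regularity} yields a Lipschitz source $(t,x)\mapsto \Phi'(u(t,x))$ on $\mathcal{T}_{\eps_0}$. Localizing the d'Alembert--Duhamel representation \eqref{eq:u} to the cone, the Duhamel contribution becomes $C^1$ near $(t_0,x_0)$, so any persistent jump of $(\pt u,\px u)$ across $\gamma$ must be carried by the free-wave part alone. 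Passing to the smooth approximants $u_n$ of Theorem \ref{th:exist} -- each $C^2$ and so free of derivative jumps -- such a limiting jump can only survive as a concentration of $\Phi_n'(u_n)$ along $\gamma$; by \eqref{eq:assn}, these concentrations can arise only where the limit $u$ crosses $\pm 1$, which is excluded inside $\mathcal{T}_{\eps_0}$. Therefore $(\pt u,\px u)$ is continuous at $(t_0,x_0)$, contradicting the hypothesis.

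The main obstacle is precisely this last step: ruling out rigorously any ``spontaneous'' jumps of $(\pt u,\px u)$ along $\gamma$ in a region where $\Phi'(u)$ is Lipschitz. This is a propagation-of-regularity statement for the degenerate system \eqref{eq:system} with a discontinuous source; the cleanest approach goes through the viscous approximants $u_n$, carefully monitoring where $\Phi_n'(u_n)$ can concentrate as $n\to\infty$ -- only at the level sets $\{u=\pm 1\}$, where the limiting $\Phi'$ jumps.
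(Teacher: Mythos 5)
Your overall skeleton is the same as the paper's: argue the contrapositive, i.e.\ show that if $u$ stays in a single regime ($|u|\le 1$ throughout $\mathcal{T}_{\eps}(t_0,x_0)$, or $|u|\ge 1$ throughout) then no singularity can appear at $(t_0,x_0)$. But the step you yourself flag as ``the main obstacle'' is precisely the step that constitutes the proof, and your proposed way of closing it does not work. The argument via the approximants $u_n$ --- that a surviving derivative jump ``can only arise as a concentration of $\Phi_n'(u_n)$ along $\gamma$, and such concentrations occur only where $u$ crosses $\pm1$'' --- is not a proof: $\Phi_n'(u_n)$ is uniformly bounded by \eqref{eq:assn}, so it never concentrates as a measure, and nothing in the weak convergence \eqref{eq:conv} lets you localize where derivative jumps of the limit are created. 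Likewise, attributing the jump to ``the free-wave part alone'' proves nothing, since with data only in $W^{1,\infty}\times L^\infty$ at the base of the cone the free-wave part can perfectly well carry jumps in $(\pt u,\px u)$; you need an argument that no \emph{new} singularity is generated by the source, which is a propagation-of-singularities statement you have not supplied. A smaller inaccuracy: \ref{ass:phi} does not make $\Phi'|_{[-1,1]}$ Lipschitz (only continuous on $(-1,1)$ with finite one-sided limits at $\pm1$, by convexity); continuity is in fact enough to make the Duhamel term $C^1$, so this is reparable, but as written the claim is unjustified.

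The device you are missing is the paper's: in the single-regime case, replace $\Phi$ \emph{globally} by a $C^1$ function $\overline{\Phi}$ that coincides with $\Phi$ on the range actually visited by $u$ in the cone (this is possible because $\Phi$ restricted to $\{|u|\le1\}$, respectively $\{|u|\ge1\}$, extends to a $C^1$ function on $\R$). Then pose the auxiliary Cauchy problem for the semilinear wave equation with source $\overline{\Phi}'$ and with data taken at time $t_0-\eps$ on $[x_0-2\eps,x_0+2\eps]$; by finite speed of propagation its solution $\overline{u}$ coincides with $u$ on $\mathcal{T}_\eps(t_0,x_0)$, and for a semilinear problem with $C^1$ nonlinearity the classical theory (the Rauch--Reed framework \cite{RR}, via the first-order reformulation \eqref{eq:system}) guarantees that no new singularities are created inside the cone. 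This reduction to a known smooth-source problem is what replaces your heuristic about concentrations, and it is the content of the paper's proof; without it, or an equivalent propagation-of-regularity lemma proved from scratch, your argument is incomplete.
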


\begin{proof}
We argue by contradiction, namely we prove that if there exists $\eps>0$ such that for all $(t,x)\in \mathcal{T}_\eps(t_0,x_0), \,|u(t,x)|\le1$ or for all $(t,x)\in \mathcal{T}_\eps(t_0,x_0),\, |u(t,x)|\ge1$ then $u$ is $C^1$ in $(t_0,x_0)$.

We can always choose $\eps$ so small such that 
\begin{equation}
\label{eq:C^1proof2}
t_0-\eps>0,\qquad 0<x_0-2\eps< x_0+2\eps<L,
\end{equation}
in this way
\begin{equation}
\label{eq:C^1proof3}
\mathcal{T}_\eps(t_0,x_0)=\bigcup_{t_0-\eps\le t\le t_0}\Big(x_0-\eps +(t-t_0),x_0+\eps-(t-t_0)\Big).
\end{equation}

Assume that 
\begin{equation*}
|u(t,x)|\le1, \qquad(t,x)\in \mathcal{T}_\eps(t_0,x_0),
\end{equation*}
and consider a $C^1$ function $\overline{\Phi}$ such that
\begin{equation*}
|u|\le1\Longrightarrow \overline{\Phi}(u)=\Phi(u).
\end{equation*}
Let $\overline{u}$ be the solution of the Cauchy problem
\begin{equation*}
\begin{cases}
\ptt \overline{u}=\pxx \overline{u}-\overline{\Phi}'(\overline{u}),&\quad t>t_0-\eps,x\in\R,\\
\overline{u}(t_0-\eps,x)={u}(t_0-\eps,x)\chi_{[x_0-2\eps,x_0+2\eps]}(x),&\quad x\in\R,\\
\pt \overline{u}(t_0-\eps,x)={u}(t_0-\eps,x)\chi_{[x_0-2\eps,x_0+2\eps]}(x),&\quad x\in\R.
\end{cases}
\end{equation*}
Due to the finite speed of propagation of the wave operator we have
\begin{equation*}
u=\overline{u}\qquad\text{in $\mathcal{T}_\eps(t_0,x_0)$}.
\end{equation*}
Using the first order reformulation \eqref{eq:system} of the equation for $\overline{u}$ we see that $\overline{u}$ is not developing any singularity in $\mathcal{T}_\eps(t_0,x_0)$.

In the case 
\begin{equation*}
|u(t,x)|\ge1, \qquad (t,x)\in \mathcal{T}_\eps(t_0,x_0),
\end{equation*}
we have only to consider a $C^1$ function $\overline{\Phi}$ such that
\begin{equation*}
|u|\ge1\Longrightarrow \overline{\Phi}(u)=\Phi(u),
\end{equation*}
and use the same argument.
\end{proof}

We conclude this Section by considering the Cauchy problem associated to \eqref{eq:system}. The motivations behind this analysis are:
\begin{itemize}
\item due to the finite speed of propagation, the Cauchy and Neumann problem share the same solution for short time and compactly supported initial data;
\item explicit formulas for the the solutions can be obtained for the Cauchy problem and those formulas do not rely on Fourier series the regularity issue is more clear.
\end{itemize}

We begin with the Cauchy problem
\begin{equation}
\label{eq:systemCauchy}
 \begin{cases}\pt Z+A\px Z=0 & {} \\
                                  Z(0,x)=Z_{0}(x) & {} 
          \end{cases} 
\end{equation}
where
\begin{equation*}
Z=\left(\begin{matrix}z_1\\z_2\\z_3\end{matrix}\right)=\left(\begin{matrix}\pt u\\\px u\\u\end{matrix}\right),\qquad
A=\left(\begin{matrix}0&-1&0\\-1&0&0\\0&0&0\end{matrix}\right)
\end{equation*}
and
\begin{equation*}
Z_0=\left(\begin{matrix}z_{1,0}\\z_{2,0}\\z_{3,0}\end{matrix}\right):  \mathbb{R} \to  \mathbb{R}^3,
\end{equation*}
is the vector of the initial conditions. 
If we diagonalize the matrix $A$ we obtain that $A=P^{-1}DP$, where
\begin{equation*}
D=\left(\begin{matrix}1&0&0\\0&0&0\\0&0&-1\end{matrix}\right), \qquad P=\left(\begin{matrix}1&0&1\\-1&0&1\\0&1&0\end{matrix}\right), \qquad P^{-1}=\frac{1}{2}\left(\begin{matrix}1&-1&0\\0&0&2\\1&1&0\end{matrix}\right).
\end{equation*}
If we define $W=PZ$ and $W_0=PZ_0$ we obtain that
\begin{equation}
\label{eq:systemCauchyW}
 \begin{cases}\pt W+D\px W=0 &{}\\
                                  W(0,x)=W_{0}(x)&{}
          \end{cases} 
\end{equation}
that can be easily solved as
\begin{equation}
\label{eq:systemCauchyw}
 \begin{cases}w_1(t,x)=w_{1,0}(x-t),&{} \\
                                  w_2(t,x)=w_{2,0}(x),&{} \\
                                  w_3(t,x)=w_{3,0}(x+t),&{}
          \end{cases} 
\end{equation}
Now, since $Z=P^{-1}W$, it results that $Z=S_t Z_0$, where
\begin{equation*}
S_t Z_0:= \frac{1}{2}\left(\begin{matrix}z_{1,0}(x-t)+z_{3,0}(x-t)+z_{1,0}(x)-z_{3,0}(x)\\2z_{2,0}(x+t)\\z_{1,0}(x-t)+z_{3,0}(x-t)-z_{1,0}(x)+z_{3,0}(x)\end{matrix}\right)
\end{equation*}
Now if we consider the Cauchy problem
\begin{equation}
\label{eq:systemCauchymod}
 \begin{cases}\pt Z+A\px Z=B(Z),&{}\\
                                  Z(0,x)=Z_{0}(x),&{}\\
      \end{cases} 
\end{equation}
where
\begin{equation*}
B(Z)=\left(\begin{matrix}-\Phi'(z_3)\\0\\z_1\end{matrix}\right)
\end{equation*}
we have that
\begin{align*}
Z(t,x)&=S_t Z_0 + \int_{0}^{t} S_{t-s}B(Z(s,x))\; ds \\
        &=S_t Z_0 + \frac{1}{2} \int_{0}^{t} \left( \begin{matrix}-\Phi'(z_3(s,x-(t-s)))+z_1(s,x-(t-s))-\Phi'(z_3(s,x))-z_1(s,x)\\ 0 \\-\Phi'(z_3(s,x-(t-s)))+z_1(s,x-(t-s))+\Phi'(z_3(s,x))+z_1(s,x)\end{matrix}\right)\; ds.
\end{align*}
The fact that the evolution of $z_2=\px u$ only involves  $S_t$ implies that the new singularities of $u$ may occur only in $\pt u(t, \cdot)$. From the physical point of view this means that the stretching $\px u$ is not sensitive to debonding-attachment phenomena.

\section{Numerical examples}
\label{sec:numerics}

In this Section we provide some numerical examples of solutions of the system described in \ref{eq:el}. As we will see, they exhibit, with proper initial conditions, a rich phenomenology. We will consider both cases with smooth and non-smooth initial conditions in order to obtain solutions with different behaviors in their derivative that explicitly show propagation along the characteristics. 

\begin{example}
\label{ex:1.1}
Let us take initial value such that $u(0,x)=u_0(x)$ is in $C^2([0,L])$:
\begin{align}
\label{eq:initC2}
\begin{cases}
u_0(x)=\xi_0 \left(\frac{x^3}{3}-L\frac{x^2}{2}\right),&\quad 0\le x \le L,\\
u_1(x)=\xi_1,&\quad 0 \le x \le L.
\end{cases}
\end{align}
\end{example}
\noindent In Figures \ref{fig:uregolare} and \ref{fig:uregolareder} we plot, respectively, the solution of \ref{eq:el} with initial conditions (\ref{eq:initC2}) and its first order derivatives with respect to time and space coordinates. In the simulation we have used the values $\xi_0=0.006$, $\xi_1=1.2$, $L=10$ and a total time evolution $T=10$. In Figure \ref{fig:uregolare} we have also inserted the plans with $u=\pm 1$ in order to show the regions where the solution has reached and exceeded the critical values. From the results, it is evident that the initial conditions allow a part of the system to pass $u=1$. It is possible to see that these two values depend on $\xi_0$ and $\xi_1$. On the other hand, the simulations show that the system also exhibits another feature: the debonding process is reversed and $u$ takes values less than $1$. The values $t^*$ (time) and $x^*$ (position) where again $u=1$ and the debonding is reversed are more evident by inspecting the behavior of $\pt u$ and $\px u$. Interestingly, $\px u$ is not sensible to the debonding process. This is consistent with the results discussed at the end of Section \ref{disc}. Moreover, it is evident that the inversion point act as a source for the explicit observation of the propagation along the characteristic curves. These features will appear also in other examples in the following.

\begin{figure}[t]
\begin{center}
\includegraphics[height=0.4\textwidth]{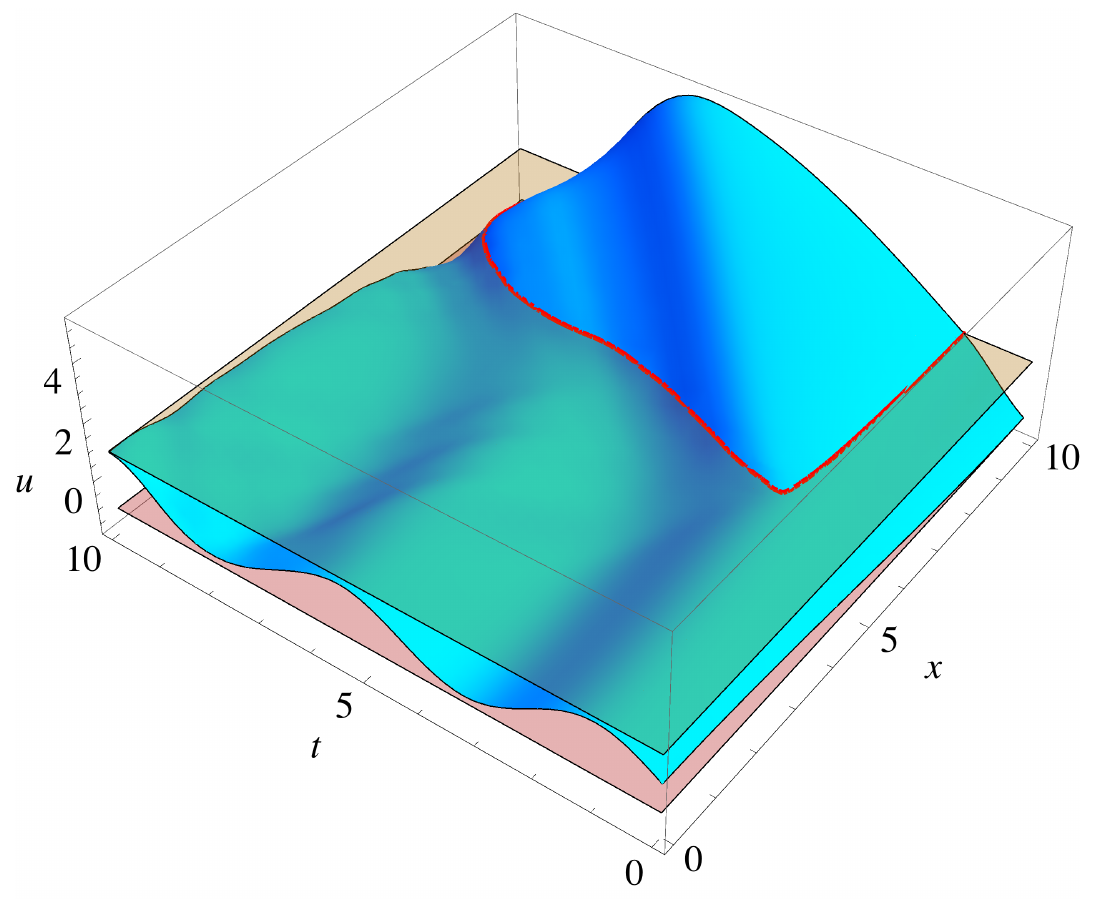}
\caption{(Color online) Solution of (\ref{eq:el}) with initial conditions (\ref{eq:initC2}). See text for the numerical values used in the simulation. 
}
\label{fig:uregolare}
\end{center}
\end{figure}

\begin{figure}[t]
\begin{center}
\begin{tabular}{c c}
\includegraphics[height=0.4\textwidth]{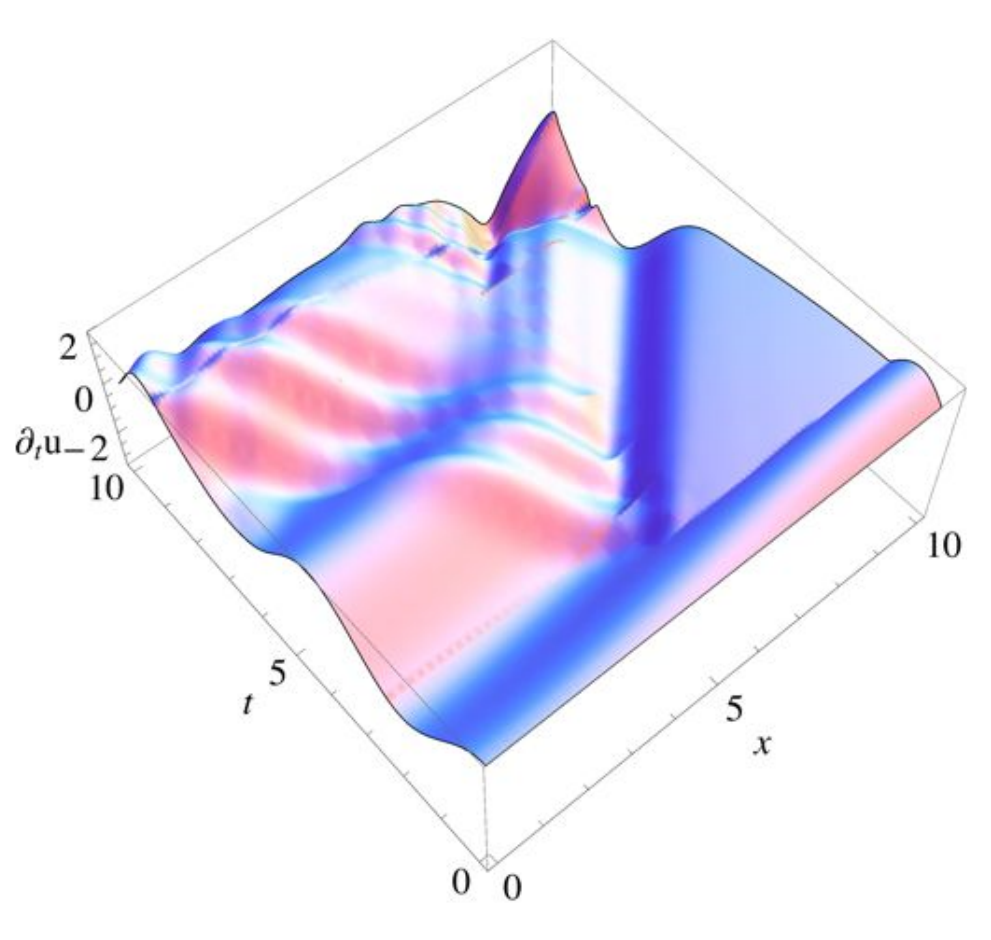} & \includegraphics[height=0.4\textwidth]{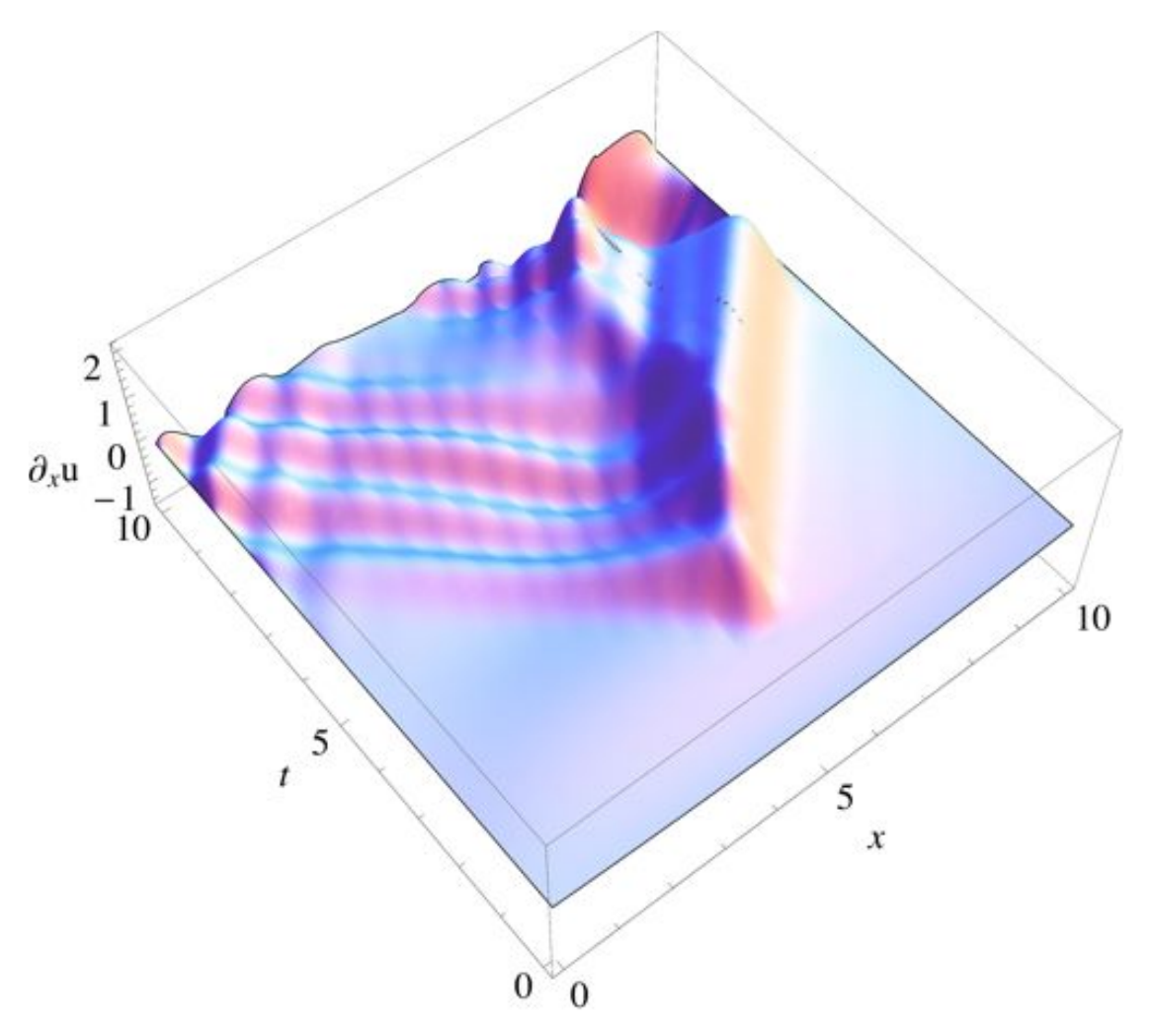}
\end{tabular}
\caption{(Color online) Left: derivative with respect to time of $u$ shown in Figure \ref{fig:uregolare}. Right: derivative with respect to space of $u$ shown in Figure \ref{fig:uregolare}. See text for the numerical values used in the simulation.
}
\label{fig:uregolareder}
\end{center}
\end{figure}

\begin{example}
\label{ex:1.2}
We consider initial values such that $u$ is in $C^1([0,L])$ for $t=0$:
\begin{align}
\label{eq:initC1}
\begin{cases}
u_0(x)=\xi_0 b\; c(x),&\quad 0\le x \le L,\\
u_1(x)=\xi_1,&\quad 0 \le x \le L.
\end{cases}
\end{align}
where $b$ is a constant defined as
\begin{equation}
\label{eq:initC1funca}
b=\frac{1}{-\frac{1}{4} L \sqrt{4 a^2-L^2}+a^2
   \left(-\tan ^{-1}\left(\frac{L}{\sqrt{4
   a^2-L^2}}\right)\right)+a L},
\end{equation}
and
\begin{align}
\label{eq:initC1funcb1}
c(x)=&-\frac{1}{2} x \sqrt{a^2-x^2}+\frac{1}{2} a^2 \tan
   ^{-1}\left(\frac{x
   \sqrt{a^2-x^2}}{x^2-a^2}\right)+a x ,\quad 0\le x \le L/2,\\
\label{eq:initC1funcb2}
c(x)=&-\frac{1}{4} L \sqrt{4 a^2-L^2}+a^2 \left(-\tan
   ^{-1}\left(\frac{L}{\sqrt{4
   a^2-L^2}}\right)\right)\notag\\
   &+\frac{1}{2} \left((L-x)
   \sqrt{a^2-(L-x)^2}+a^2 \tan
   ^{-1}\left(\frac{L-x}{\sqrt{a^2-(L-x)^2}}\right)+2
   a x\right),\quad L/2 < x \le L,
\end{align}
with $a>L/2$.
\end{example}
\noindent We notice that the second space derivative of $c$ is not continuous in $x=L/2$.
We have performed two different simulations. In Figures \ref{fig:uC1} and \ref{fig:uC1der} we plot, respectively, the solution of \eqref{eq:el} with initial conditions (\ref{eq:initC1}) and its first order derivatives with respect to time and space coordinates. In the simulation we have used the values $\xi_0=0.7$, $\xi_1=1.1$, $a=6$, $L=10$ and a total time evolution $T=3$. In Figures \ref{fig:uC1b} and \ref{fig:uC1bder} we find the solution for the same system but with $\xi_1=1.4$. As in the previous example, we observe that the inversion point (where the debonding is reversed) acts as a source for the direct observation of propagation along the characteristics. Moreover, we explicitly observe propagation along the characteristics from the initial point $(t=0,x=L/2)$ where $\px u$ is not $C^2$. From other numerical experiments (not shown in the paper) we can deduce that this phenomenon is ubiquitous whenever there are two (or more) points in the $x$-domain at $t=0$ with $\px u$ not in $C^2$.

\begin{figure}[t]
\begin{center}
\includegraphics[height=0.4\textwidth]{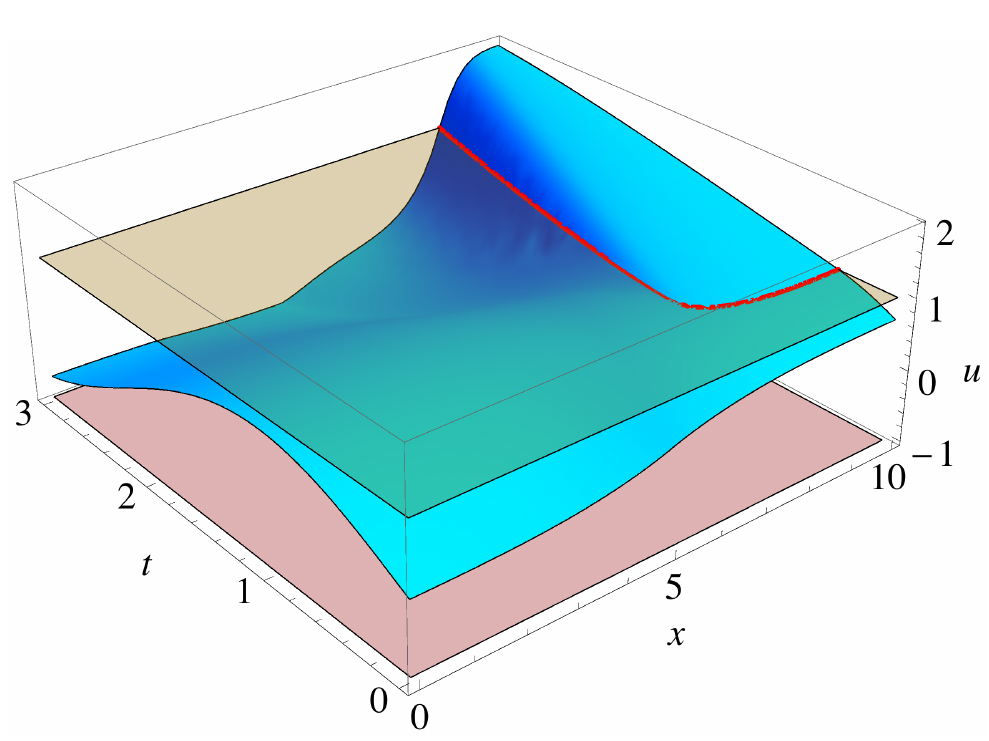}
\caption{(Color online) Solution of (\ref{eq:el}) with initial conditions (\ref{eq:initC1}). See text for the numerical values used in the simulation. 
}
\label{fig:uC1}
\end{center}
\end{figure}

\begin{figure}[t]
\begin{center}
\begin{tabular}{c c}
\includegraphics[height=0.4\textwidth]{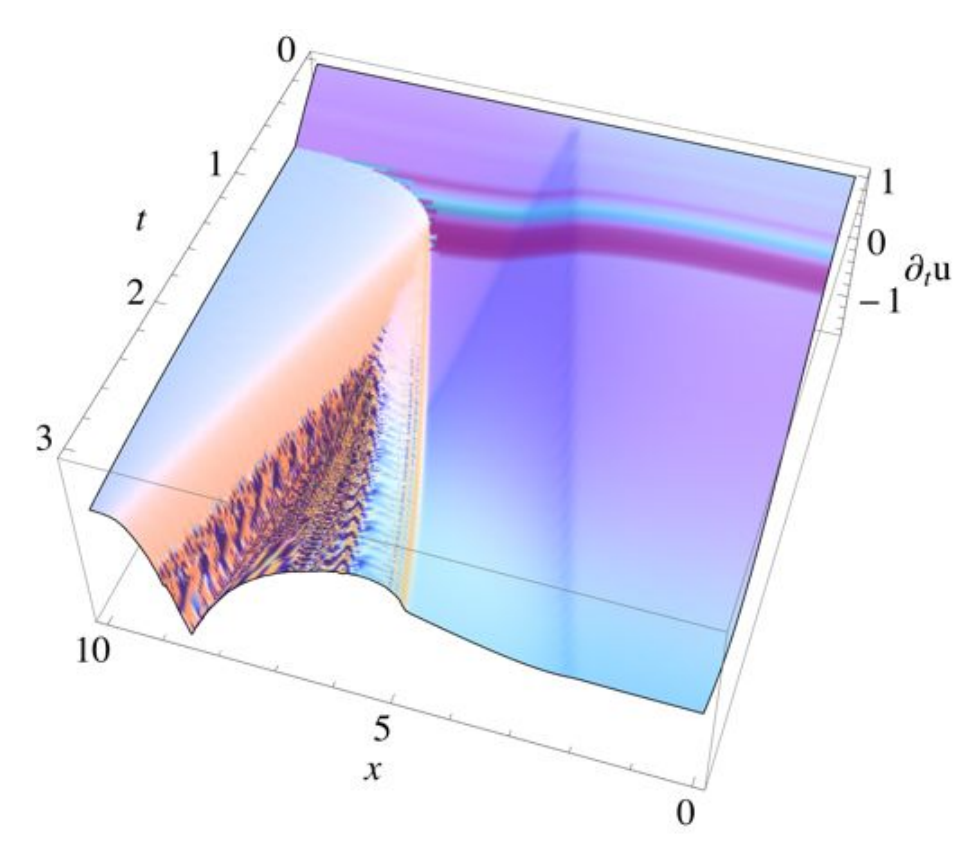} & \includegraphics[height=0.4\textwidth]{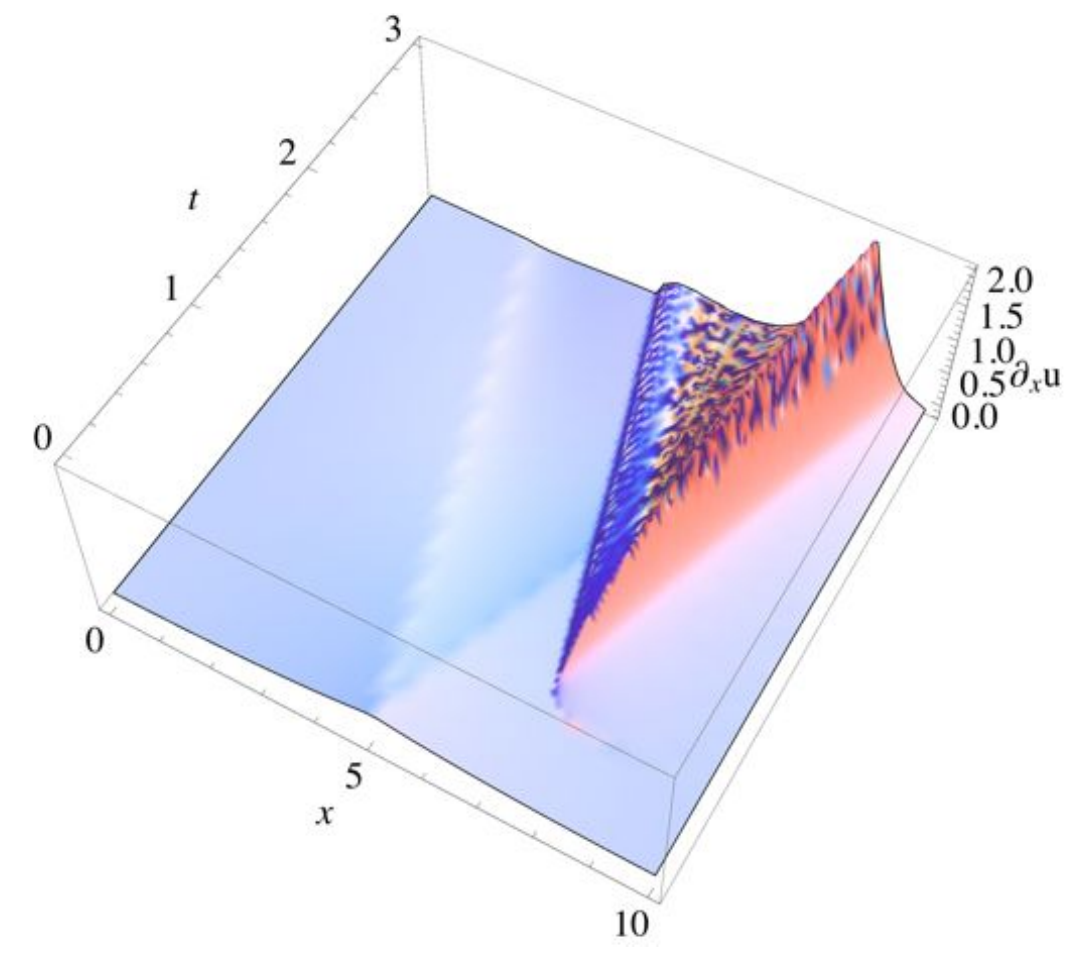}
\end{tabular}
\caption{(Color online) Left: derivative with respect to time of $u$ shown in Figure \ref{fig:uC1}. Right: derivative with respect to space of $u$ shown in Figure \ref{fig:uC1}. See text for the numerical values used in the simulation.
}
\label{fig:uC1der}
\end{center}
\end{figure}

\begin{figure}[t]
\begin{center}
\includegraphics[height=0.4\textwidth]{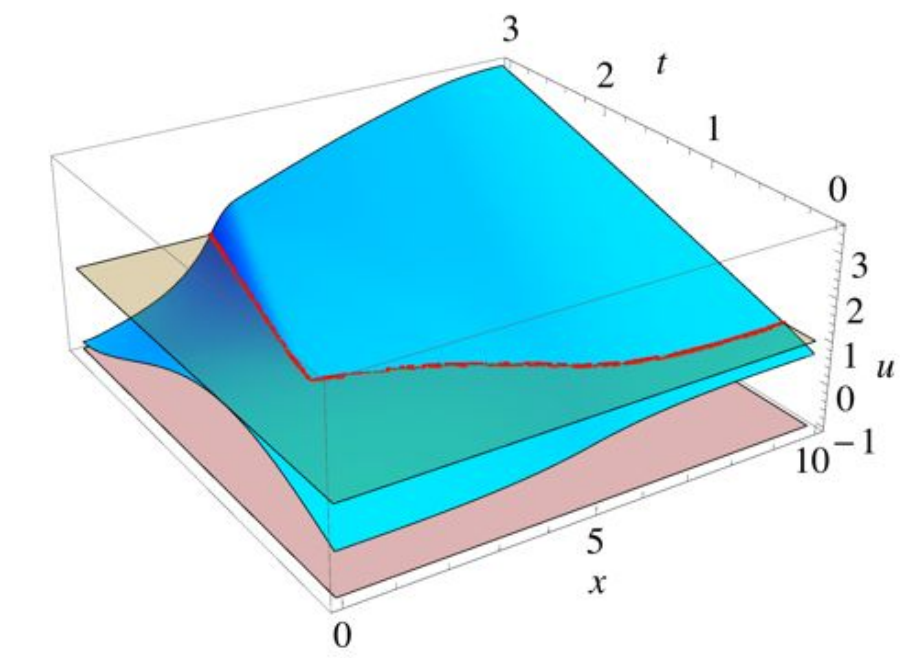}
\caption{(Color online) Solution of (\ref{eq:el}) with initial conditions (\ref{eq:initC1}). See text for the numerical values used in the simulation. 
}
\label{fig:uC1b}
\end{center}
\end{figure}

\begin{figure}[t]
\begin{center}
\begin{tabular}{c c}
\includegraphics[height=0.4\textwidth]{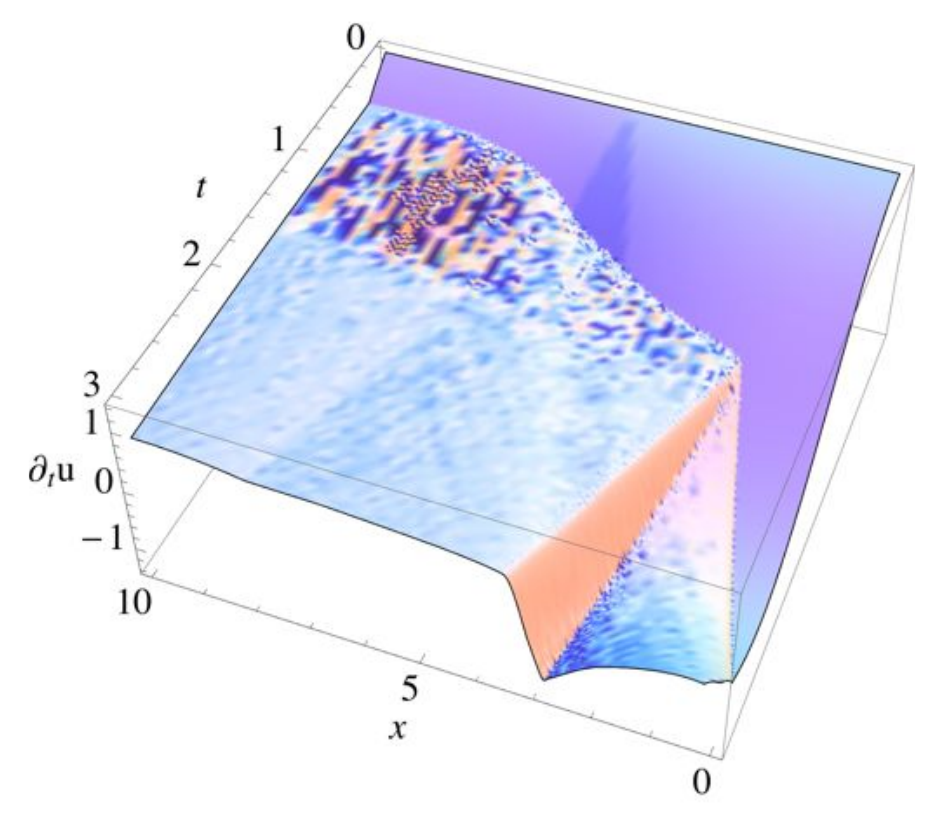} & \includegraphics[height=0.4\textwidth]{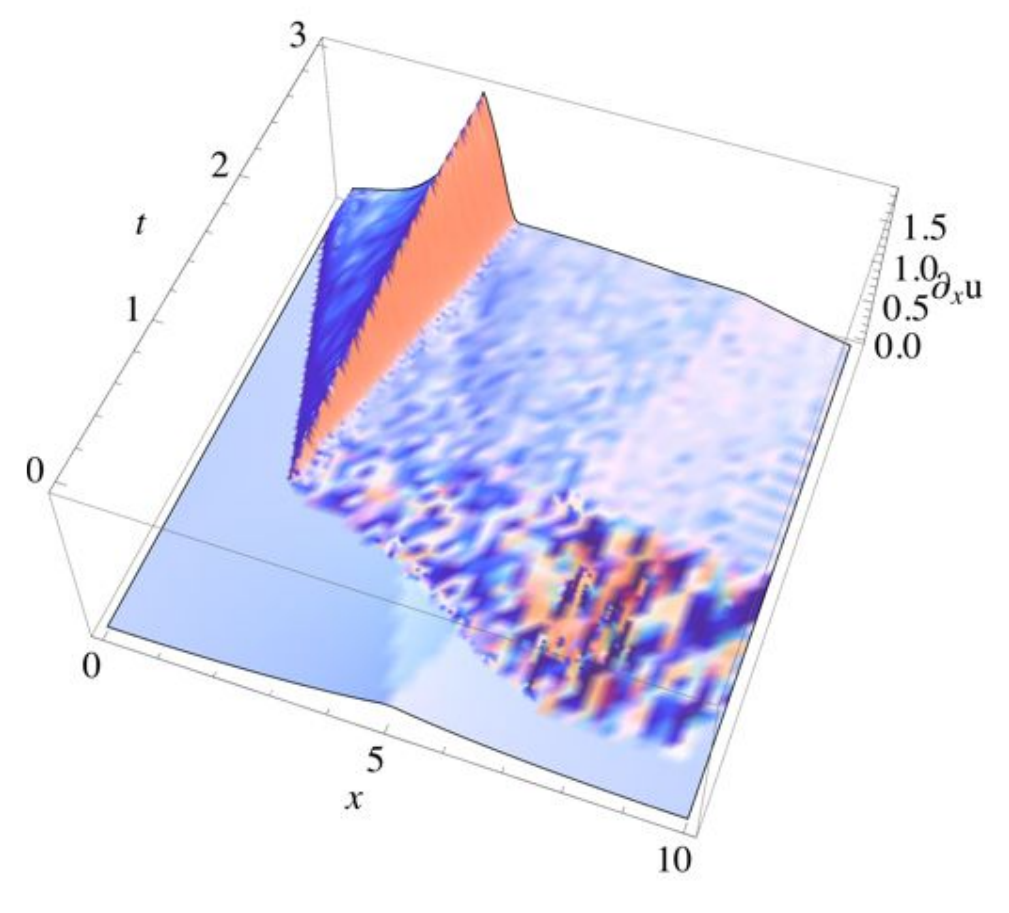}
\end{tabular}
\caption{(Color online) Left: derivative with respect to time of $u$ shown in Figure \ref{fig:uC1b}. Right: derivative with respect to space of $u$ shown in Figure \ref{fig:uC1b}. See text for the numerical values used in the simulation.
}
\label{fig:uC1bder}
\end{center}
\end{figure}

\begin{example}
\label{ex:3.1}
We consider now the initial conditions:
\begin{align}
\label{eq:initC1middle}
\begin{cases}
u_0(x)=\xi_0 (-\frac{1}{2}+b\; c(x))+1,&\quad 0\le x \le L,\\
u_1(x)=\xi_1,&\quad 0 \le x \le L.
\end{cases}
\end{align}
where we have defined $b$ and $c$ in Equations (\ref{eq:initC1funca})-(\ref{eq:initC1funcb2}). 
\end{example}
\noindent Thus, we fix the discontinuity point of the second derivative of $u$ with respect to $x$ (at $t=0$) when $u(0,L/2)=1$. Moreover, we set $\xi_0=0.7$, $\xi_1=-1.2$ (the initial velocity is reversed), $a=6$, $L=10$ and a total time evolution $T=3$. As in the previous cases, in Figures \ref{fig:uC1middle} and \ref{fig:uC1middleder} we plot, respectively, the solution of (\ref{eq:el}) with initial conditions (\ref{eq:initC1middle}) and its first order derivatives with respect to time and space coordinates. We notice that the point $(t=0, x=L/2)$ is now both an inversion point and a discontinuity point for the initial second space derivative of $u$. Thus we again observe the explicit propagation along the characteristics.  Moreover, the value of $\xi_1$ is large enough to observe a complete debonding phenomenon with $u<-1$ after some time. This is evident form the behavior of $\pt u$ in Figure \ref{eq:initC1funcb2}. We also notice that there are not new sources of characteristics.  

\begin{figure}[t]
\begin{center}
\includegraphics[height=0.4\textwidth]{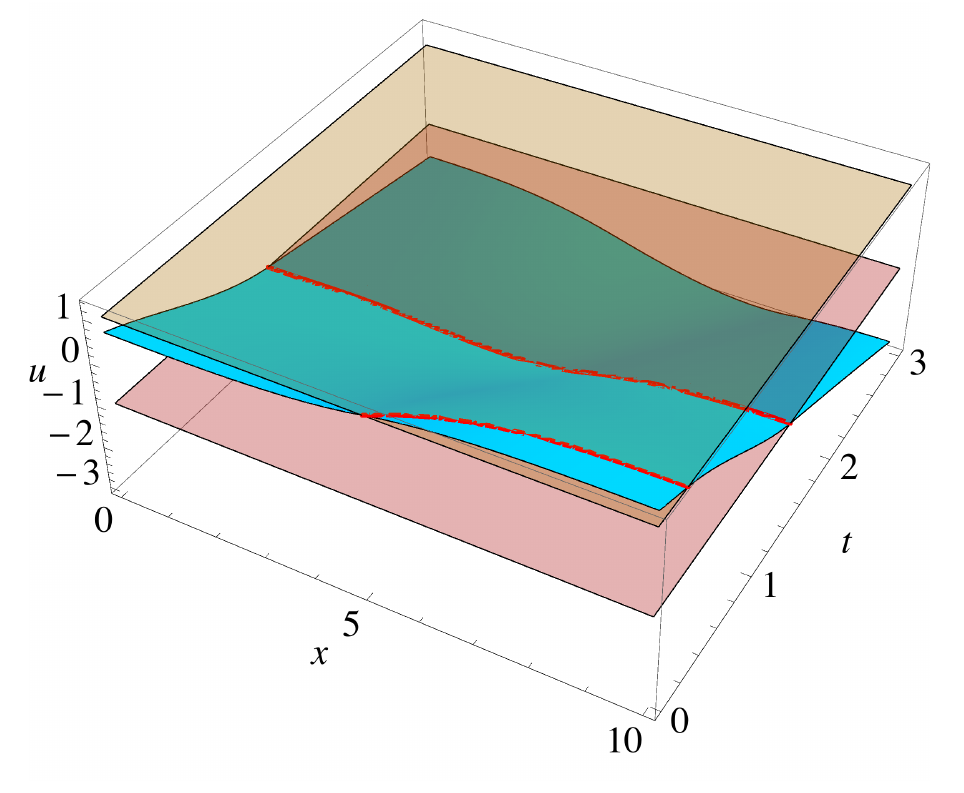}
\caption{(Color online) Solution of (\ref{eq:el}) with initial conditions (\ref{eq:initC1middle}). See text for the numerical values used in the simulation. 
}
\label{fig:uC1middle}
\end{center}
\end{figure}

\begin{figure}[t]
\begin{center}
\begin{tabular}{c c}
\includegraphics[height=0.4\textwidth]{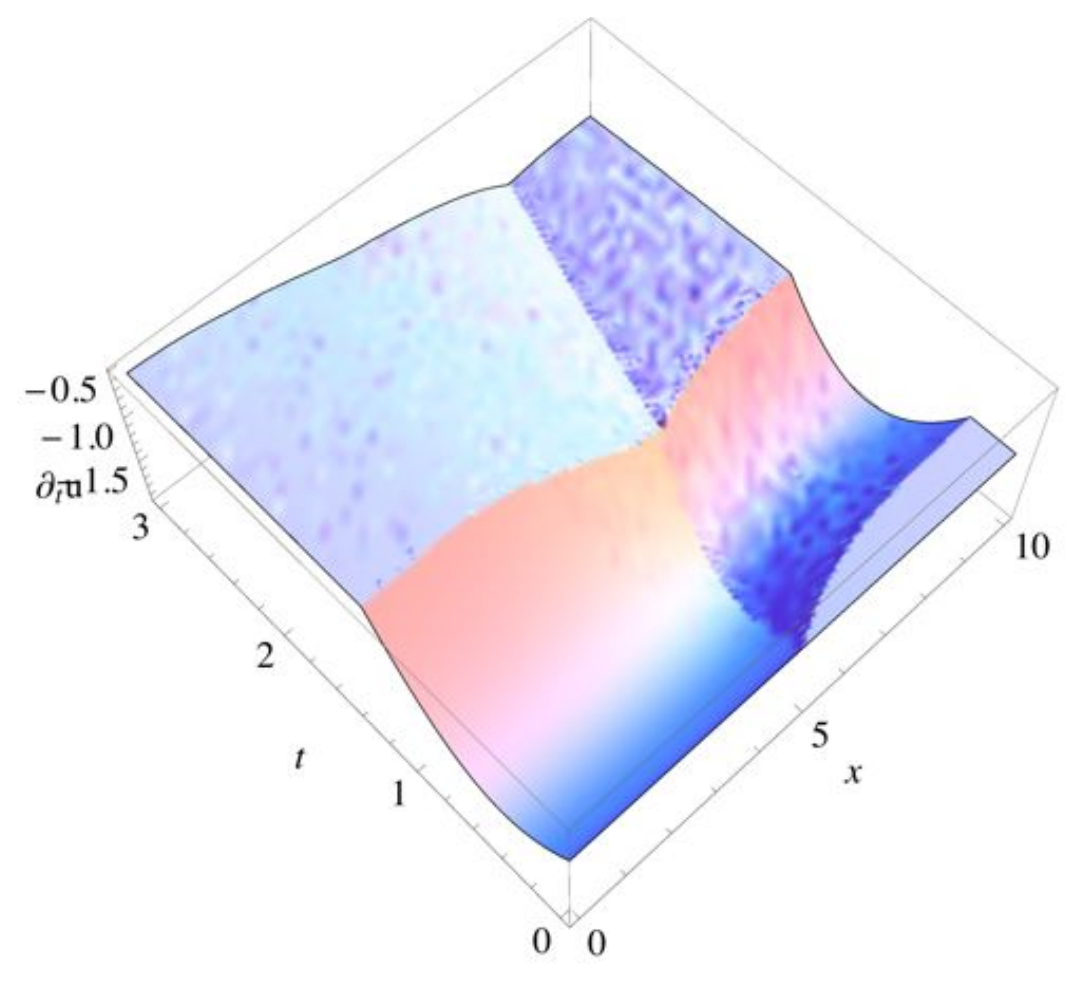} & \includegraphics[height=0.4\textwidth]{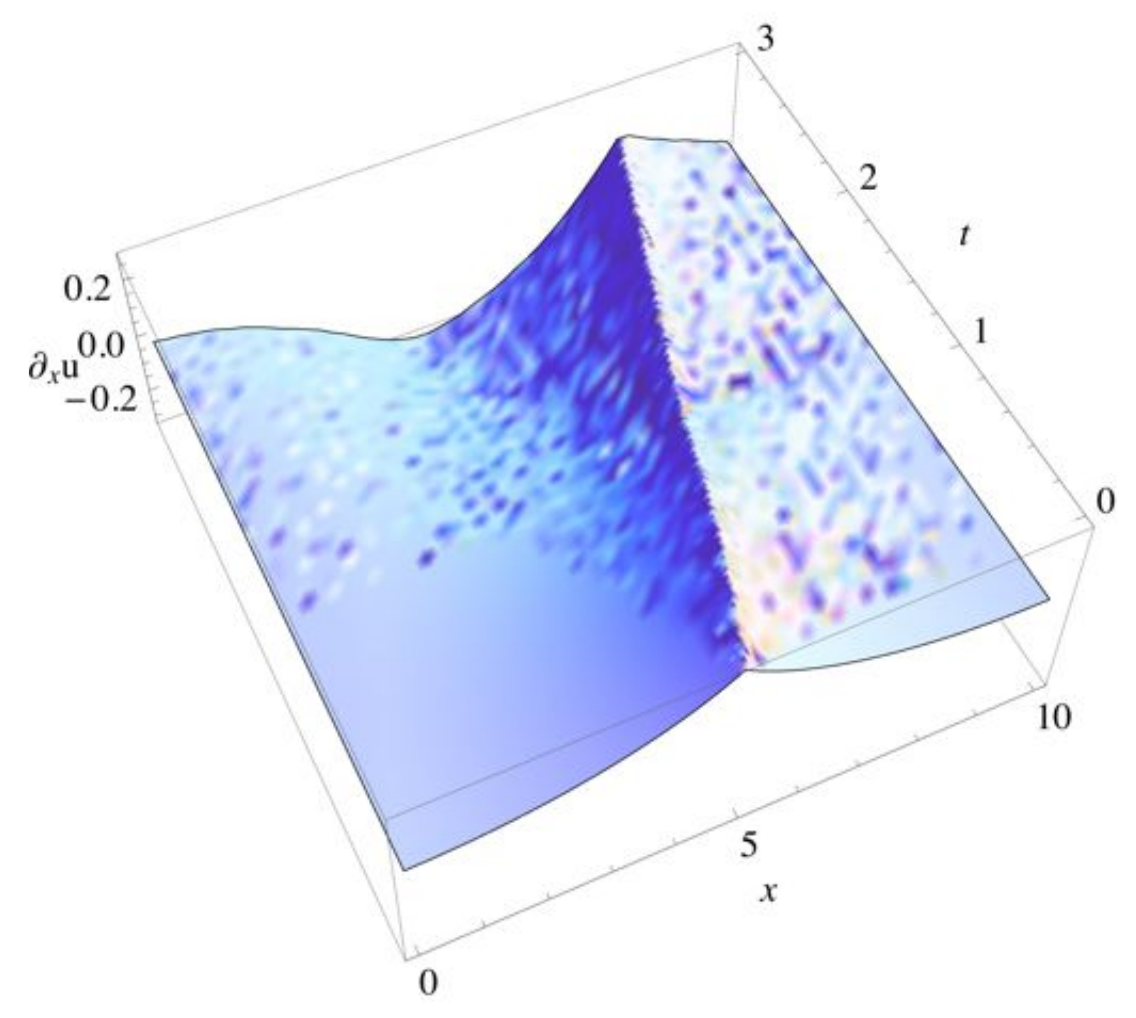}
\end{tabular}
\caption{(Color online) Left: derivative with respect to time of $u$ shown in Figure \ref{fig:uC1middle}. Right: derivative with respect to space of $u$ shown in Figure \ref{fig:uC1middle}. See text for the numerical values used in the simulation.
}
\label{fig:uC1middleder}
\end{center}
\end{figure}

We stress that we have numerically tested that separating the inversion point and the discontinuity point gives rise to two separated characteristics sets. This is evident from the results obtained in the following example.
\begin{example}
\label{ex:4}
Let us consider the initial conditions
\begin{align}
\label{eq:initC1middleb}
\begin{cases}
u_0(x)=\xi_0 (\frac{1}{2}+b\; c(x)),&\quad 0\le x \le L,\\
u_1(x)=\xi_1,&\quad 0 \le x \le L.
\end{cases}
\end{align}
\end{example}
\noindent In Figures \ref{fig:uC1middleb} and \ref{fig:uC1middlebder} we show, respectively, the solution of (\ref{eq:el}) with initial conditions (\ref{eq:initC1middleb}) and its first order derivatives with respect to time and space coordinates [same parameters used for the simulations with the conditions in Equation (\ref{eq:initC1middle})]. Moreover, in this case there is not a complete debonding, even in the example where a large negative value of $\xi_1$ has been chosen. As a consequence, we can observe a new set of characteristics in the plots of $\pt u$ and $\px u$.

\begin{figure}[t]
\begin{center}
\includegraphics[height=0.4\textwidth]{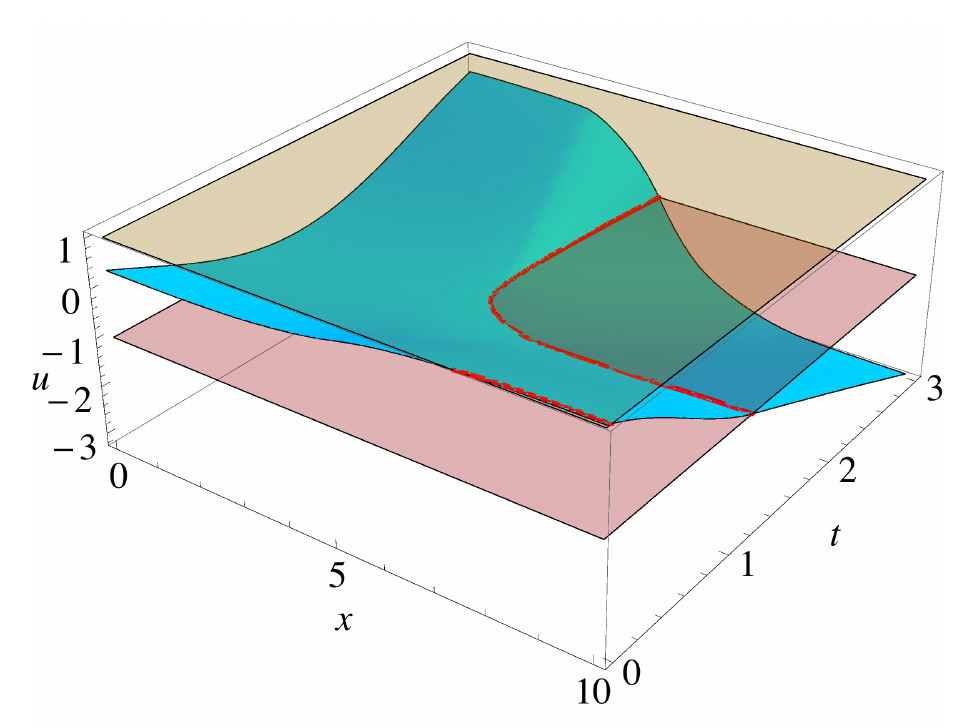}
\caption{(Color online) Solution of (\ref{eq:el}) with initial conditions (\ref{eq:initC1middleb}). See text for the numerical values used in the simulation. 
}
\label{fig:uC1middleb}
\end{center}
\end{figure}

\begin{figure}[t]
\begin{center}
\begin{tabular}{c c}
\includegraphics[height=0.4\textwidth]{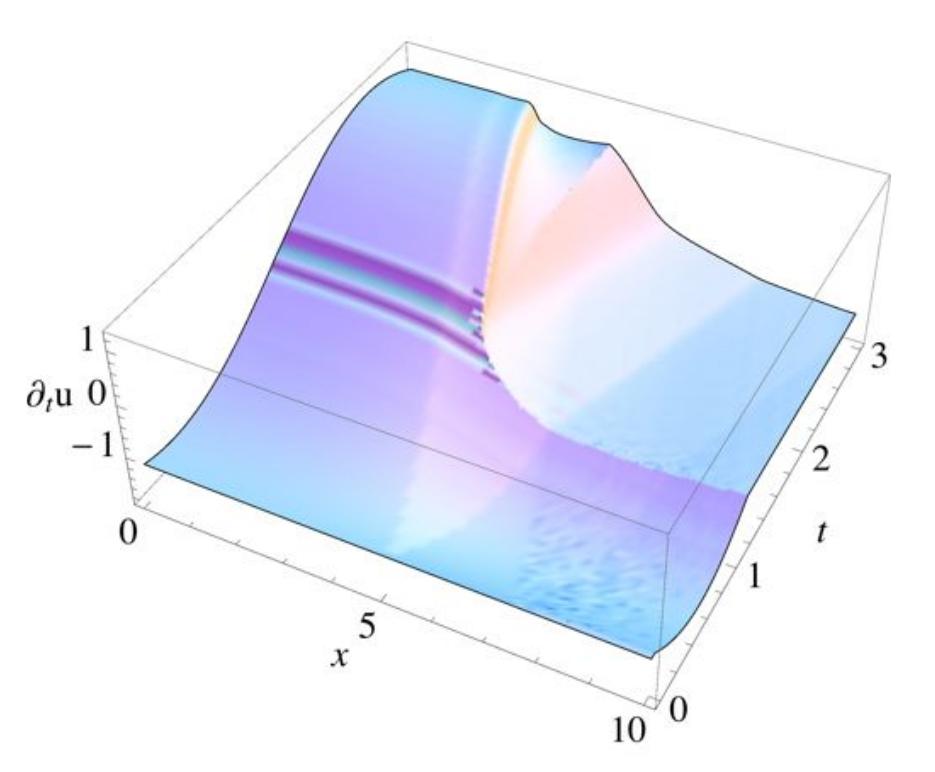} & \includegraphics[height=0.4\textwidth]{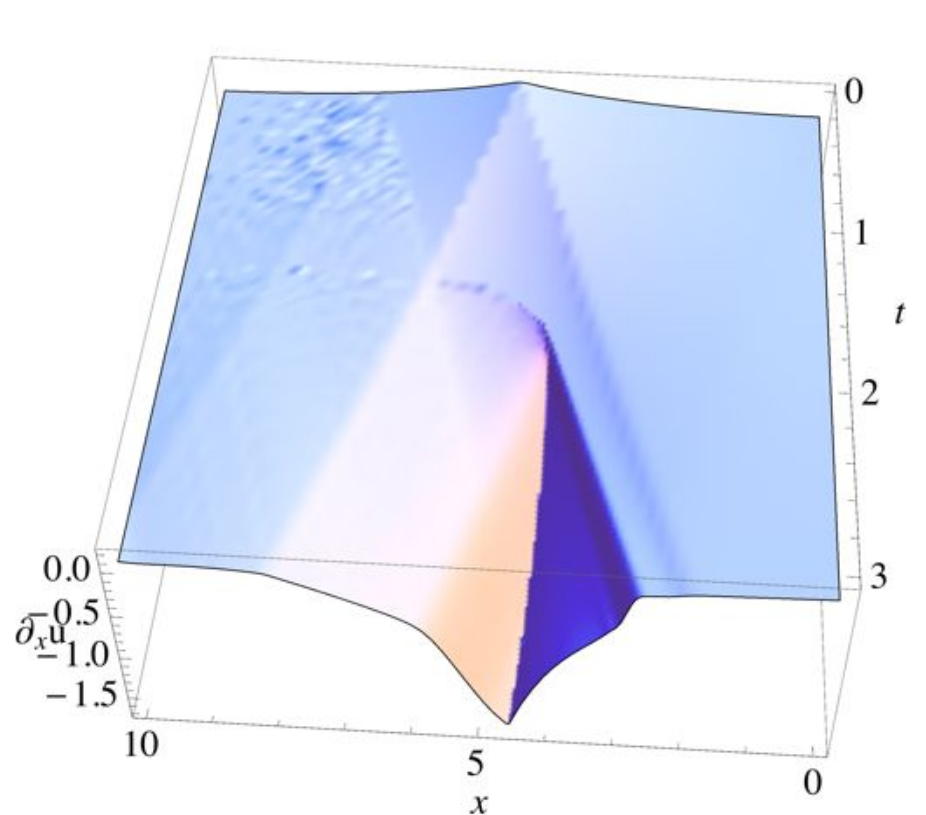}
\end{tabular}
\caption{(Color online) Left: derivative with respect to time of $u$ shown in Figure \ref{fig:uC1middleb}. Right: derivative with respect to space of $u$ shown in Figure \ref{fig:uC1middle}. See text for the numerical values used in the simulation.
}
\label{fig:uC1middlebder}
\end{center}
\end{figure}

\begin{figure}[t]
\begin{center}
\includegraphics[height=0.4\textwidth]{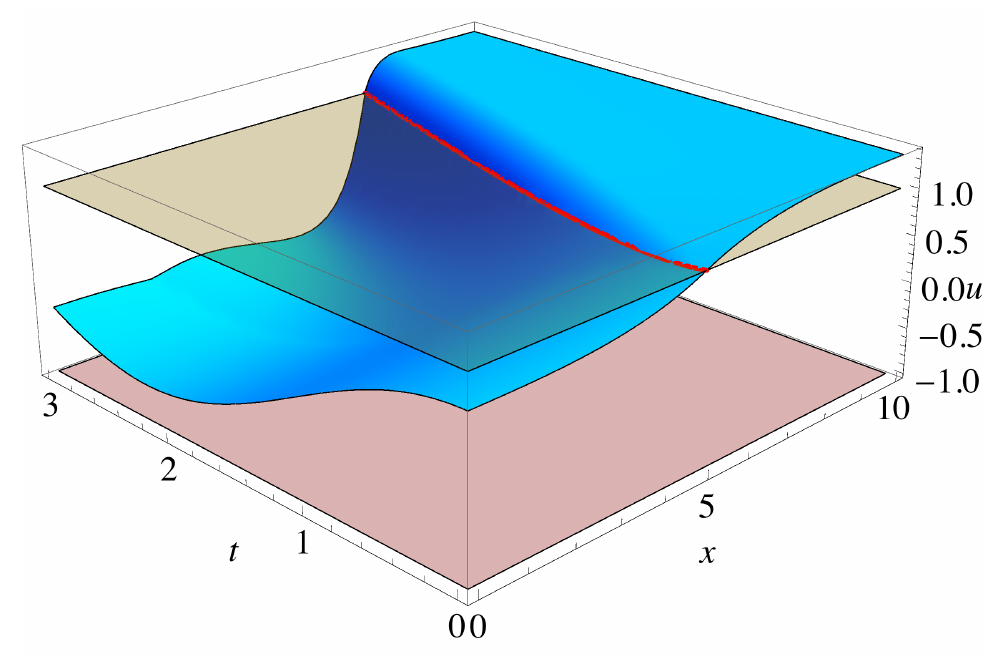}
\caption{(Color online) Solution of (\ref{eq:el}) with initial conditions (\ref{eq:initC1double}). See text for the numerical values used in the simulation. 
}
\label{fig:uC1double}
\end{center}
\end{figure}

\begin{figure}[t]
\begin{center}
\begin{tabular}{c c}
\includegraphics[height=0.4\textwidth]{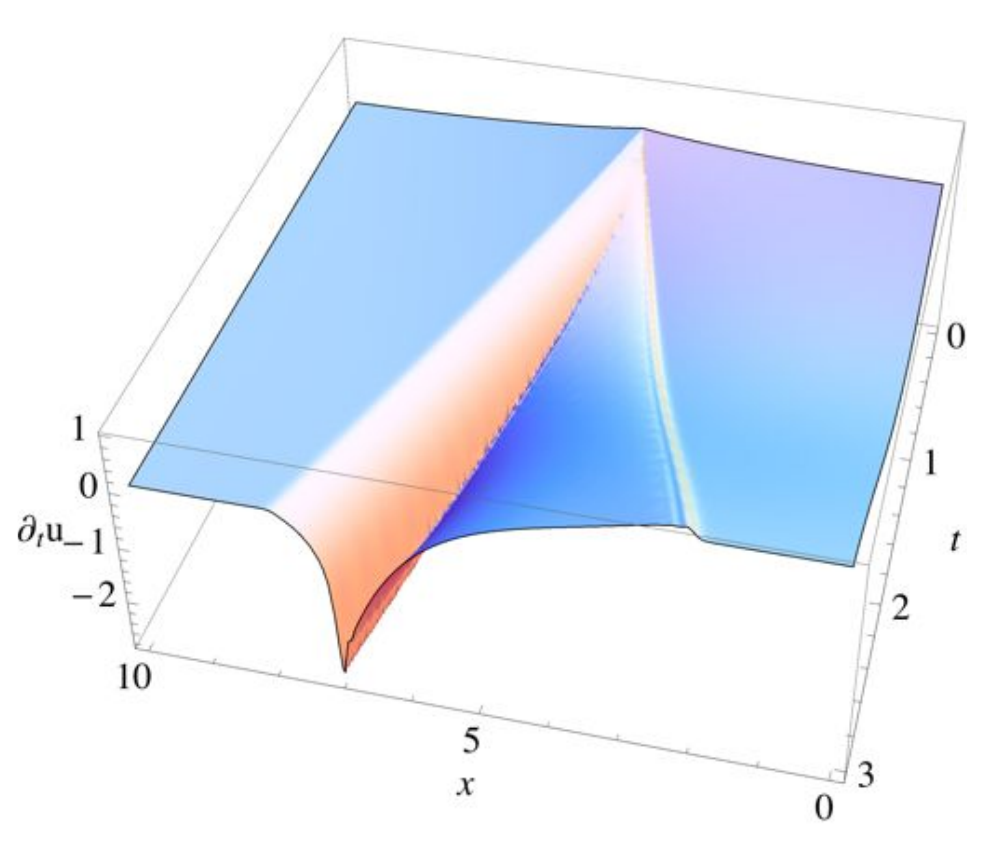} & \includegraphics[height=0.4\textwidth]{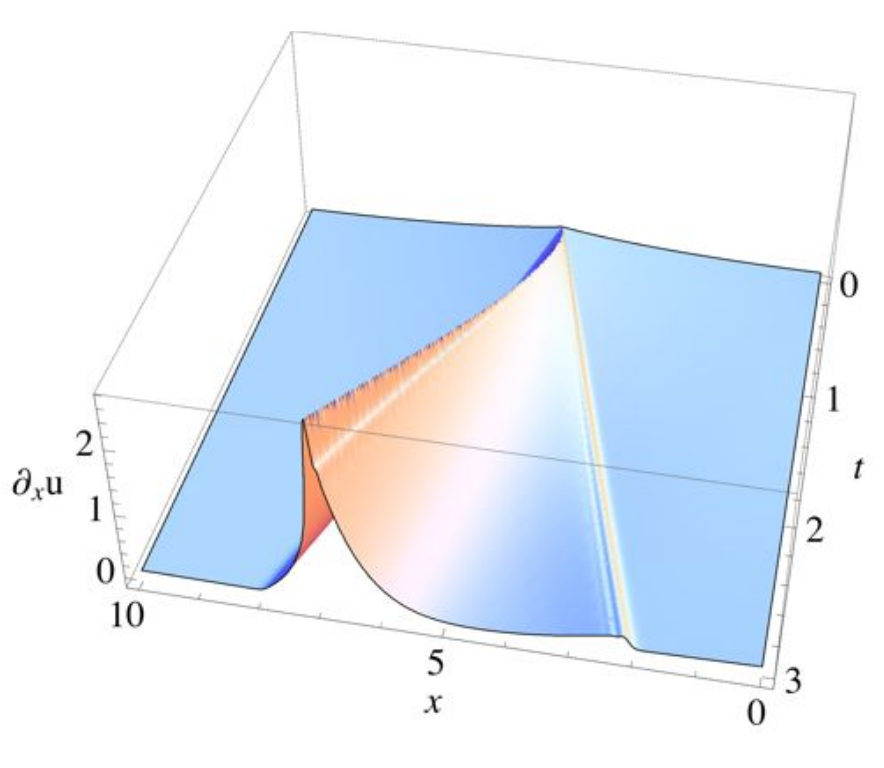}
\end{tabular}
\caption{(Color online) Left: derivative with respect to time of $u$ shown in Figure \ref{fig:uC1double}. Right: derivative with respect to space of $u$ shown in Figure \ref{fig:uC1double}. See text for the numerical values used in the simulation.
}
\label{fig:uC1doubleder}
\end{center}
\end{figure}

\begin{example}
\label{ex:5}
We have also considered the  case of initial condition where $\pt u(0,x)=u_1(x)$ is in $C([0,L])$:
\begin{align}
\label{eq:initC1double}
\begin{cases}
u_0(x)=\xi_0 (-\frac{1}{2}+b\; c(x))+1,&\quad 0\le x \le L,\\
u_1(x)=\xi_1 b\;\frac{d}{dx}[c(x)],&\quad 0 \le x \le L.
\end{cases}
\end{align}
\end{example}
The discontinuity point of the second derivative of $u$ with respect to $x$ (at $t=0$) appears for $x=L/2$ when $u(0,L/2)=1$. We have fixed $\xi_0=0.7$, $\xi_1=0.8$, $a=6$, $L=10$ and a total time evolution $T=3$. In Figures \ref{fig:uC1double} and \ref{fig:uC1doubleder} we show, respectively, the solution of (\ref{eq:el}) with initial conditions (\ref{eq:initC1double}) and its first order derivatives with respect to time and space coordinates. We observe the propagation of the discontiuity in both partial derivatives.

\begin{figure}[t]
\begin{center}
\includegraphics[height=0.4\textwidth]{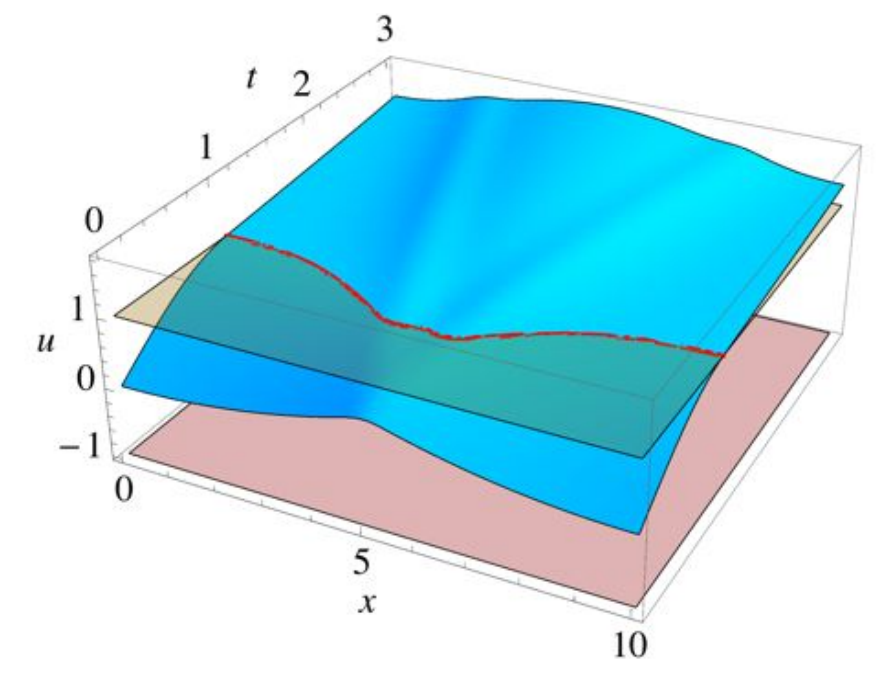}
\caption{(Color online) Solution of (\ref{eq:el}) with initial conditions (\ref{eq:initCmoll}). See text for the numerical values used in the simulation. 
}
\label{fig:uCmoll}
\end{center}
\end{figure}

\begin{figure}[t]
\begin{center}
\begin{tabular}{c c}
\includegraphics[height=0.4\textwidth]{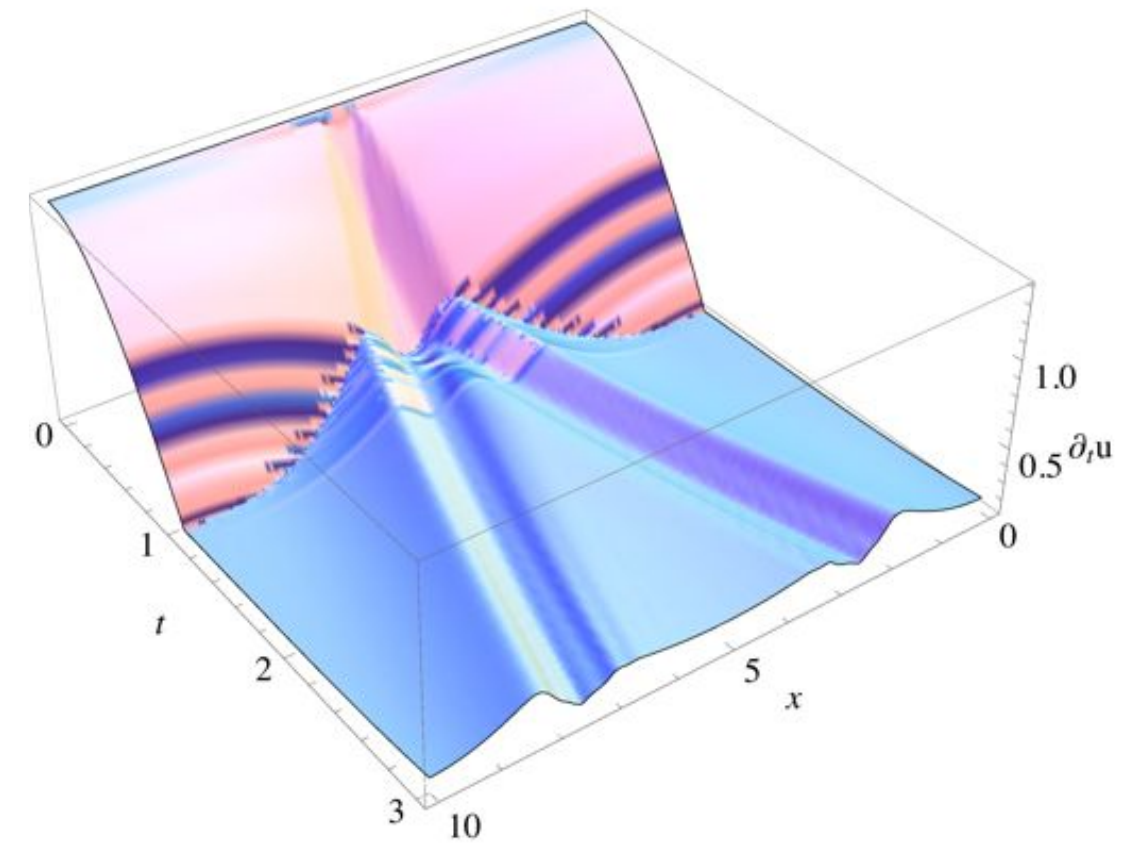} & \includegraphics[height=0.4\textwidth]{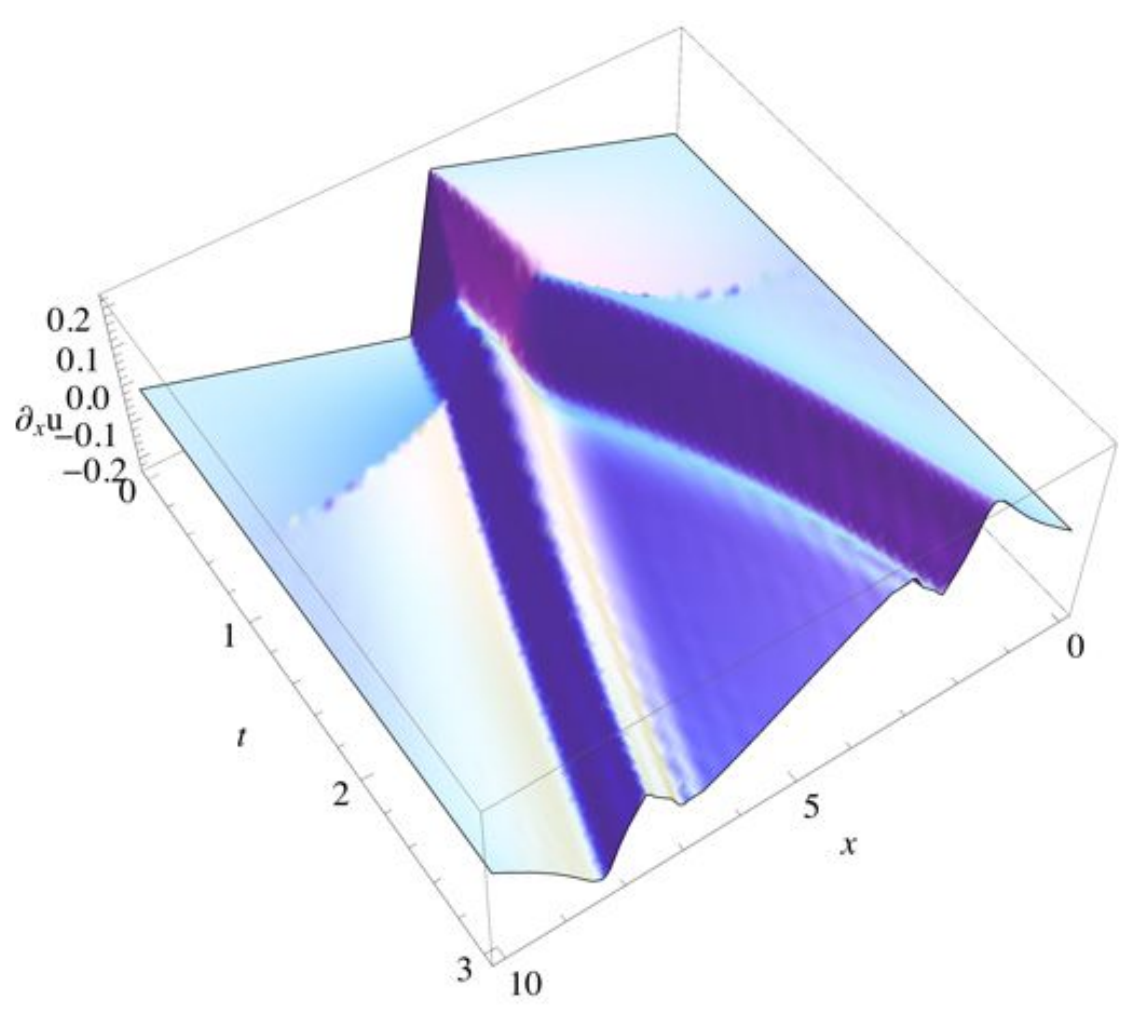}
\end{tabular}
\caption{(Color online) Left: derivative with respect to time of $u$ shown in Figure \ref{fig:uCmoll}. Right: derivative with respect to space of $u$ shown in Figure \ref{fig:uCmoll}. See text for the numerical values used in the simulation.
}
\label{fig:uCmollder}
\end{center}
\end{figure}

\begin{example}
\label{ex:1.3}
Finally, we have considered the case of initial condition where $u$ at $t=0$ is in $C([0,L])$. In order to perform the numerical simulation we have mollified the initial data in the following way:
\begin{align}
\label{eq:initCmoll}
\begin{cases}
u_0(x)=\xi_0 f_\eta(x),&\quad 0\le x \le L,\\
u_1(x)=\xi_1,&\quad 0 \le x \le L,
\end{cases}
\end{align}
where
\begin{align}
\label{eq:feta}
f_\eta(x)=\frac{2}{\left(\frac{L}{2}-\eta\right) L}\times
\begin{cases}
x^2,&\quad 0\leq x<\frac{L}{2}-\eta,\\
-\frac{L/2-\eta}{\eta}x^2+L\frac{L/2-\eta}{\eta}x-L\frac{(L/2-\eta)^2}{2\eta},&\frac{L}{2}-\eta\leq x<\frac{L}{2}+\eta,\\
(x-L)^2,&\frac{L}{2}+\eta\le x\le L,
\end{cases}
\end{align}
and $\eta$ is the mollification parameter.
\end{example}
\noindent We have fixed $\xi_0=0.5$, $\xi_1=1.4$, $\eta=0.3$, $L=10$ and a total time evolution $T=3$. In Figures \ref{fig:uCmoll} and \ref{fig:uCmollder} we show, respectively, the solution of (\ref{eq:el}) with initial conditions (\ref{eq:initCmoll}) and its first order derivatives with respect to time and space coordinates. Also in this case, we directly observe the presence of characteristics due to debonding and an hint of the propagation due to the jump of the value of the first derivative in $t=0, x=L/2$. We have verified an analogous behavior of the solution $u$ when the value of $\eta$ is reduced.


\end{document}